\newcommand{\Ric}{\operatorname{Ric}}
\newcommand{\g}{\mathrm g}
\newcommand{\spn}{\operatorname{span}}
\newcommand{\Id}{\operatorname{Id}}
\newcommand{\rnk}{\operatorname{rank}}
\newcommand{\tr}{\operatorname{trace}}
\newcommand{\R}{\mathbf R}
\renewcommand{\geq}{\geqslant}
\newtheorem{theorem}{Theorem}[]
\newtheorem{lemma}[theorem]{Lemma}
\newtheorem{proposition}[theorem]{Proposition}
\newtheorem{mainthm}{\sc Theorem}
\theoremstyle{definition}
\newtheorem{definition}[theorem]{Definition}
\theoremstyle{remark}
\newtheorem{remark}[theorem]{Remark}
\newtheorem{convention}[theorem]{Convention}
\title{Almost isotropic K\"ahler manifolds.}
\author[B. Schmidt]{Benjamin Schmidt}
\author[K. Shankar]{Krishnan Shankar}
\author[R. Spatzier]{Ralf Spatzier}
\address{\begin{tabular}{lll}
Michigan State University & University of Oklahoma & Univeristy of Michigan  \\
Dept. of Mathematics & Dept. of Mathematics & Dept. of Mathematics \\
619 Red Cedar Road & 601 Elm Avenue & 530 Church Street  \\ 
East Lansing, MI, 48824 & Norman, OK, 73019 & Ann Arbor, MI, 48109\\
{\tt schmidt@math.msu.edu} & {\tt Krishnan.Shankar-1@ou.edu} & {\tt spatzier@umich.edu}
\end{tabular}
}
\numberwithin{equation}{section}
\numberwithin{theorem}{section}
\subjclass[2010]{53C20, 53C21} 
\date{\today}
\begin{document}

\newcommand{\spacing}[1]{\renewcommand{\baselinestretch}{#1}\large\normalsize}
\spacing{1.2}

\thispagestyle{empty}

\begin{abstract}
Let $M$ be a complete Riemannian manifold and suppose $p\in M$. For each unit vector $v \in T_p M$, the \textit{Jacobi operator}, $\mathcal{J}_v: v^\perp \rightarrow v^\perp$ is the symmetric endomorphism, $\mathcal{J}_v(w) = R(w,v)v$.  Then $p$ is an \textit{isotropic point} if there exists a constant $\kappa_p \in \mathbf{R}$ such that $\mathcal{J}_v = \kappa_p \Id_{v^\perp}$ for each unit vector $v \in T_pM$.  If all points are isotropic, then $M$ is said to be isotropic; it is a classical result of Schur that isotropic manifolds of dimension at least 3 have constant sectional curvatures.  

In this paper we consider \textit{almost isotropic manifolds}, i.e. manifolds having the property that for each $p \in M$, there exists a constant $\kappa_p \in \mathbb{R}$, such that the Jacobi operators $\mathcal{J}_v$ satisfy $\text{rank}(\mathcal{J}_v - \kappa_p \Id_{v^\perp}) \leq 1$ for each unit vector $v \in T_pM$. Our main theorem classifies the almost isotropic simply connected K\"ahler manifolds, proving that those of dimension $d=2n \geq 4$ are either isometric to complex projective space or complex hyperbolic space or are totally geodesically foliated by leaves isometric to $\mathbf{C}^{n-1}.$ 
\end{abstract}

\maketitle

\section{Introduction}
Let $M$ be a Riemannian manifold with tangent bundle $TM$ and Levi-Civita connection $\nabla$.  Defining the curvature tensor $R:TM^3 \rightarrow TM$ by $R(X,Y)Z=[\nabla_X,\nabla_Y]Z-\nabla_{[X,Y]} Z$ for sections $X,Y,Z \in \Gamma (TM)$, the \textit{Jacobi operator} of a unit vector $v \in T_pM$ is the symmetric endomorphism $\mathcal{J}_v:v^{\perp} \rightarrow v^{\perp}$ defined by $\mathcal{J}_v(w)=R(w,v)v.$  

The eigenvalues of the Jacobi operator $\mathcal{J}_v$ determine the sectional curvatures of two dimensional subspaces of $T_pM$ containing the vector $v$.  As sectional curvatures determine the curvature tensor, one may hope to classify manifolds with highly restricted Jacobi operators, especially as one removes the dependence of the spectra of $\mathcal{J}_v$ on $v \in TM$.  

For instance, a point $p\in M$ is an \textit{isotropic point} if all two dimensional subspaces of $T_pM$ have equal sectional curvatures. Equivalently, $p \in M$ is isotropic if there exists $\kappa_p \in \mathbf{R}$ such that $\mathcal{J}_v=\kappa_p \Id_{v^{\perp}}$ for each unit vector $v\in T_pM$.  A Riemannian manifold is \textit{isotropic} when each of its points is isotropic.   Schur proved that  connected isotropic manifolds of dimension at least three have constant sectional curvatures \cite{sc}. 

Osserman's Conjecture provides another framework for studying Riemannian manifolds with restricted Jacobi operators. A Riemannian manifold is \textit{Osserman} if the eigenvalues of the Jacobi operators $\mathcal{J}_v$ are independent of the unit vector $v \in TM$.   Riemannian manifolds in which the local isometries act transitively on unit tangent vectors are obviously Ossermann.  These spaces, consisting of the rank one locally symmetric spaces and flat spaces, are conjecturally the only Ossermann manifolds \cite{os}.  This conjecture is known to hold except possibly in dimension sixteen \cite{chi, gi, gsv, ni1, ni2, ni3}.

In this paper, we consider manifolds in which Jacobi operators $\mathcal{J}_v$ may have, a priori, spectra varying with the vector $v$, but share an eigenvalue of large multiplicity.  Theorem \ref{main} below is a K\"ahler analogue of Schur's theorem.  The complete, connected, and simply connected K\"ahler manifolds of constant holomorphic sectional curvatures consist of the manifolds $\mathbf{C}^n$, $\mathbf{C}\mathbf{H}^n$, and $\mathbf{C}\mathbf{P}^n$ equipped with Riemannian symmetric metrics \cite{haw, igu}.  While only points in $\mathbf{C}^n$ are isotropic, points in all of these spaces are \textit{almost isotropic} as now defined. 
\vskip 3pt

\noindent \textbf{Definition.} A point $p \in M$ is \textit{almost isotropic} if there exists $\kappa_p \in \mathbf{R}$ such that $\rnk(\mathcal{J}_v-\kappa_p \Id_{v^{\perp}})\leq 1$ for each unit vector $v \in T_pM$.  A Riemannian manifold is \textit{almost isotropic} if each of its points is almost isotropic.\vskip 3pt

The constant $\kappa_p$ associated to an almost isotropic point is unique.  This is obvious in dimension other than three.  In dimension three, a point cannot have distinct isotropic constants $\kappa_1 \neq \kappa_2$ since otherwise the assignment $v \mapsto\ker(\mathcal{J}_v -\kappa_1 \Id_{v^{\perp}})$ defines a continuous nonvanishing line field on the two dimensional unit sphere.  We now write $\kappa$ for $\kappa_p$ and emphasize the point only when there might be some ambiguity.

Note that if $\Sigma$ is any Riemannian surface, then each point in the Riemannian product $M=\Sigma \times \mathbf{R}^n$ is an almost isotropic point with constant $0$.  The isotropic points in $M$ are those that project to flat points in $\Sigma$.

\begin{mainthm}\label{main}
Let $M$ be a complete, connected and simply connected, almost isotropic K\"ahler manifold of real dimension $d=2n \geq 4$.  Then $\kappa:M \rightarrow \mathbf{R}$ is constant. Moreover,
\begin{enumerate}
\item  if $\kappa>0$, then $M$ is isometric to a metric of constant holomorphic sectional curvature $4\kappa$ on the $n$-dimensional complex projective space $\mathbf{C}\mathbf{P}^n;$ 

\item if $\kappa<0$, then $M$ is isometric to a metric of constant holomorphic sectional curvature $4\kappa$ on the $n$-dimensional complex hyperbolic space $\mathbf{C}\mathbf{H}^n$;

\item if $\kappa=0$, then the set of nonisotropic points $\mathcal{O} \subset M$ admits a $(d-2)$-dimensional parallel tangent distribution that is tangent to a foliation by complete and totally geodesic leaves isometric to $\mathbf{C}^{n-1}$.
\end{enumerate}
\end{mainthm}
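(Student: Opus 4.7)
The plan is to exploit the rank-one constraint $\mathcal{J}_v = \kappa \Id_{v^\perp} + \lambda(v)\, u_v \otimes u_v$ together with the K\"ahler identities $R(JX, JY) = R(X, Y)$ and $[J, R(X, Y)] = 0$, working first pointwise and then globally. Here $u_v \in v^\perp$ is a unit vector, defined up to sign when $\lambda(v) \neq 0$, and $v \mapsto \lambda(v)$ extends to a quadratic form on $T_pM$ since $\tr \mathcal{J}_v = (d-1)\kappa + \lambda(v) = \Ric(v, v)$.

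The key algebraic input is a pointwise dichotomy: at each $p$, either (A) $u_v = \pm Jv$ whenever $\lambda(v) \neq 0$, or (B) there is a distinguished $J$-invariant real 2-plane $\Pi_p \subset T_pM$ containing every $u_v$, with $R$ vanishing whenever one of its arguments lies in $\Pi_p^\perp$. The first Bianchi identity, the K\"ahler $J$-symmetries, and the rank-one constraint should sort the $U(n)$-equivariant pieces of the curvature tensor into these two types. A Schur-type argument using the second Bianchi identity then shows $d\kappa \equiv 0$: the almost-isotropic perturbation being rank one in a $J$-related direction, the derivative terms in the Schur computation reduce to algebraic identities forcing $\kappa$ constant on the connected $M$.

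Assume $\kappa \neq 0$. Alternative (B) is then impossible: if $\Pi_p$ existed, any unit $v \in \Pi_p^\perp$ would give $\mathcal{J}_v \equiv 0$, forcing $\kappa = 0$. Hence $u_v = \pm Jv$ everywhere, so the holomorphic sectional curvature $\langle \mathcal{J}_v(Jv), Jv\rangle = \kappa + \lambda(v)$ is independent of $v$; matching with the model identifies the constant as $4\kappa$, and the Hawley-Igusa classification of complete simply connected K\"ahler manifolds of constant holomorphic sectional curvature gives $M \cong \mathbf{C}\mathbf{P}^n$ when $\kappa > 0$ and $M \cong \mathbf{C}\mathbf{H}^n$ when $\kappa < 0$.

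For $\kappa = 0$ and $p \in \mathcal{O}$, alternative (B) holds and yields a canonical $J$-invariant 2-plane $\Pi_p$; its orthogonal complement $\mathcal{D} := \Pi^\perp$ is a smooth $J$-invariant distribution of real rank $d - 2$ on $\mathcal{O}$. The vanishing of $R$ on $\mathcal{D}$, together with the second Bianchi identity, forces $\mathcal{D}$ to be $\nabla$-parallel; parallelism then yields integrability and totally geodesic leaves, each a flat K\"ahler manifold (since $\mathcal{J}_v \equiv 0$ for $v$ tangent to a leaf), hence isometric to $\mathbf{C}^{n-1}$ by completeness and simple connectedness. The principal obstacle is the pointwise dichotomy, where the K\"ahler structure must be played off carefully against the rank-one constraint; once it is in hand, the remaining arguments follow standard Schur, symmetric-space recognition, and de Rham patterns.
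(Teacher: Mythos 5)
Your proposed \emph{pointwise} dichotomy---(A) $u_v=\pm Jv$ whenever $\lambda(v)\neq 0$, or (B) a fixed $J$-invariant plane $\Pi_p$ containing every $u_v$---is false as an algebraic statement in real dimension $4$, and this is the genuine gap in the argument. The paper's classification (Theorem~\ref{kahleralgebraic}(2)) exhibits, for $\dim V=4$ and $\kappa\neq 0$, K\"ahler almost isotropic curvature tensors $R=\kappa R_1+\tau R_A$ with $A=J\circ(\mu_1\pi^{W_1}+\mu_2\pi^{W_2})$, $\mu_1\mu_2=\kappa/\tau$, and $\mu_1\neq\mu_2$. For such $R$ the extremal eigendirection of $\mathcal{J}_v$ is the line through $Av$, and taking $v=(e_1+e_2)/\sqrt{2}$ with unit $e_i\in W_i$ gives $Av=(\mu_1 Je_1+\mu_2 Je_2)/\sqrt{2}$, which is neither proportional to $Jv$ nor confined to a fixed $2$-plane as $v$ ranges over the unit sphere. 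So the first Bianchi identity and the K\"ahler symmetries alone cannot sort the curvature into types (A) and (B) in dimension $4$; there is a genuine one-parameter family of ``mixed'' pointwise possibilities, and no purely algebraic argument will remove them.

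Eliminating this third family requires a \emph{local}, not pointwise, argument, which is precisely the content of the paper's Proposition~\ref{k12}: if $\mu_1\neq\mu_2$ persisted on a neighborhood, the differential Bianchi identity applied to an orthonormal framing adapted to $W_1,W_2$ forces both holomorphic plane fields to be integrable and totally geodesic, hence (Lemma~\ref{split}) a local Riemannian product; then $W_2\subset\ker(\mathcal{J}_{v_1})$, contradicting $\rnk(\mathcal{J}_{v_1}-\kappa\Id)\leq 1$ when $\kappa\neq 0$ and $\dim M=4$. Your proof needs an analogous step inserted before the Schur argument. With that repaired, the rest of your outline matches the paper's endgame: on $\{\kappa\neq 0\}$ one has $R=\kappa(R_1+R_J)$ pointwise, so the metric is Einstein with constant holomorphic curvature $4\kappa$, Schur gives constancy of $\kappa$, and Hawley--Igusa gives $\mathbf{C}\mathbf{P}^n$ or $\mathbf{C}\mathbf{H}^n$; on $\{\kappa=0\}$ the nonisotropic set $\mathcal{O}$ carries the constant-rank-$(d-2)$ nullity distribution $W^\perp$, and the parallelism, completeness, and leaf isometry type are taken from Abe~\cite{abe} rather than being rederived from the second Bianchi identity. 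Note also that the paper reaches the pointwise classification not via a $U(n)$-equivariant decomposition of $R$ but by first classifying all almost isotropic algebraic curvature tensors (Theorem~\ref{algebraic}) through codimension-one totally geodesic distributions on spheres (Section~\ref{totgeo}) and only then imposing the K\"ahler identity; that alternative route is what makes the dimension-$4$ exception visible, and any version of your $U(n)$-decomposition approach would have to reproduce it.
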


In case (3), each point $p \in \mathcal{O}$ admits a neighborhood that is isometric to a metric product of $\mathbf{C}^{n-1}$ with a Riemannian surface.  This local product structure extends to a global product structure when the metric is real analytic \cite[Theorem 8]{abe}.  The authors are unsure whether there is a global product structure in the smooth category (compare with \cite[Corollary 2]{li} and  \cite[Section 4]{zhe}).

It is worth noting that Theorem \ref{main} does not hold in the purely Riemannian setting.  In dimension three, Riemannian manifolds that pointwise have constant vector curvature are almost isotropic; there exist infinite dimensional moduli spaces of such manifolds \cite{ScWo1,ScWo2}.  Almost Hermitian manifolds with pointwise constant antiholomorphic sectional curvatures are also almost isotropic; there exist nontrivial examples of such manifolds \cite{agi,sa}.

The main work in proving Theorem \ref{main} is to first classify the possible curvature tensors at a point in an almost isotropic K\"ahler manifold.  To describe this classification, let $(V,\langle \cdot, \cdot \rangle)$ denote a real inner product space.  An \textit{algebraic curvature tensor} on $V$ is a tensor $R:V^3 \rightarrow V$, $(x,y,z) \mapsto R(x,y)z$, that satisfies the \textit{curvature symmetries}: $$\langle R(x,y)z, w\rangle = -\langle R(y,x)z, w\rangle=\langle R(z,w)x, y \rangle$$ $$R(x,y)z+R(y,z)x+R(z,x)y=0.$$ 

Given a skew symmetric endomorphism $A:V \rightarrow V$, define tensors $R_1:V^3 \rightarrow V$ and $R_A:V^3 \rightarrow V$ by $$R_1(x,y)z=\langle y,z\rangle x-\langle x,z\rangle y$$ $$R_A(x,y)z=2\langle x, Ay \rangle Az+\langle x,Az\rangle Ay-\langle y,Az\rangle Ax.$$  It is straightforward to verify that $R_1$ and $R_A$ are algebraic curvature tensors.

A \textit{complex model} consists of an even dimensional real inner product space $(V,\langle \cdot, \cdot \rangle)$ and an orthogonal almost complex structure $J:V \rightarrow V$.  A \textit{holomorphic plane} is a two dimensional $J$-invariant subspace of $V$.  An algebraic curvature tensor $R$ has \textit{constant holomorphic curvatures} when all holomorphic planes have equal sectional curvature.  An algebraic curvature tensor $R$ on a complex model is \textit{K\"ahler} if it satisfies the following additional \textit{K\"ahler symmetry}: $$R(x,y)z=R(Jx,Jy)z.$$  It may be verified that the algebraic curvature tensor $R=\kappa(R_1+R_J)$ is K\"ahler, almost isotropic with associated constant $\kappa$, and has constant holomorphic curvatures $4\kappa$; the curvature tensor at a point in $\mathbf{C}^n$, $\mathbf{C}\mathbf{P}^n$, or $\mathbf{C}\mathbf{H}^n$ is of this form with $\kappa=0$, $\kappa>0$, and $\kappa<0$, respectively.

Given a subspace $W$ of $V$, let $\pi^W:V \rightarrow W$ denote the orthogonal projection of $V$ onto $W$.

\begin{mainthm}\label{kahleralgebraic}
The algebraic curvature tensors $R$ on a complex model $(V,\langle \cdot, \cdot \rangle, J)$ that are K\"ahler and almost isotropic with associated constant $\kappa$ are classified as follows.

\begin{enumerate}
\item If $\dim(V)=2$, then $R=\kappa R_1=\frac{\kappa}{4}(R_1+R_J).$ \vskip 2pt

\item If $\dim(V)=4$ and $\kappa \neq 0$, then there exists constants $\tau=\pm 1$ and $\mu_1, \mu_2 \in \mathbf{R}$ and orthogonal holomorphic planes $W_1$ and $W_2$ such that $\mu_1 \cdot \mu_2=\kappa/\tau$, and letting $A=J \circ (\mu_1 \pi^{W_1}+\mu_2 \pi^{W_2})$, $R=\kappa R_1+\tau R_{A}$. \vskip 2pt

\item If $\dim(V) \geq 6$ and $\kappa \neq 0$, then $R=\kappa(R_1+R_J)$.\vskip 2pt 

\item If $\dim(V) \geq 4$ and $\kappa=0$, then there exists $c \in \mathbf{R}$ and a holomorphic plane $W$ such that, letting $A=J \circ \pi^W$, $R=c R_A.$
\end{enumerate}

\end{mainthm}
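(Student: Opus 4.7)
\emph{Dimension two} is immediate: since $v^\perp$ is one-dimensional, almost-isotropy forces $\mathcal{J}_v = \kappa \Id_{v^\perp}$ and hence $R = \kappa R_1$; the identity $R_J = 3 R_1$ on a $J$-adapted basis $\{v, Jv\}$ confirms case (1). For $\dim V \geq 4$, for each unit $v$ I would record the almost-isotropic data $\lambda_v \in \R$ and, when $\lambda_v \neq 0$, a unit $e_v \in v^\perp$ determined up to sign, with
\[
\mathcal{J}_v(w) = \kappa w + \lambda_v \langle w, e_v\rangle e_v, \quad w \in v^\perp.
\]
Combining the K\"ahler symmetry $R(x, y) = R(Jx, Jy)$ with the pair symmetry yields $R(X, Y) \circ J = J \circ R(X, Y)$, from which a direct computation produces $\mathcal{J}_{Jv}(w) = \kappa w + \lambda_v \langle w, Je_v\rangle Je_v$ on $(Jv)^\perp$. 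Hence $(e_{Jv}, \lambda_{Jv}) = (\pm Je_v, \lambda_v)$: the pointwise deviation data is $J$-equivariant.

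The core step, and the main obstacle, is to extract a global skew-symmetric endomorphism $A : V \to V$ from the pointwise line data. The symmetry $K(v, w) = K(w, v)$ of sectional curvature translates into
\[
\lambda_v \langle w, e_v\rangle^2 = \lambda_w \langle v, e_w\rangle^2
\]
for all orthonormal $v, w$. Polarizing in $w$ and invoking the first Bianchi identity applied to $R - \kappa R_1$, I would produce $A$ and a scalar $\tau$ (a sign $\pm 1$ when $\kappa \neq 0$, a free scalar when $\kappa = 0$) such that $\mathcal{J}_v^{R - \kappa R_1}(w) = 3\tau \langle w, Av\rangle Av$ for $w \in v^\perp$. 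Organizing the sign ambiguity in $e_v$ so that $v \mapsto Av$ is linear is the delicate point of this step. Since an algebraic curvature tensor is determined by its sectional curvatures, this identifies $R = \kappa R_1 + \tau R_A$.

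The K\"ahler condition applied to $R = \kappa R_1 + \tau R_A$ then becomes an algebraic condition on $A$: a direct computation shows that $A$ commutes with $J$ and decomposes as $A = J \circ S$ for a self-adjoint endomorphism $S$ with $[S, J] = 0$, together with relations between the spectrum of $S$ and the pair $(\kappa, \tau)$. Diagonalizing $S$ into $J$-invariant eigenspaces with eigenvalues $\mu_i$ and testing the K\"ahler identity on vectors spread across eigenspaces produces the relations $\tau \mu_i \mu_j = \kappa$ for any pair $i \neq j$. A case analysis then yields the four cases: a single eigenvalue $\mu$ with $\tau \mu^2 = \kappa$ gives case (3), with $A = \mu J$ and $R = \kappa(R_1 + R_J)$; two $J$-invariant eigenspaces in $\dim V = 4$ with $\tau \mu_1 \mu_2 = \kappa$ give case (2); when $\kappa = 0$, only a single nonzero eigenvalue supported on a holomorphic plane $W$ is possible, giving case (4) with $A = J \circ \pi^W$; and in $\dim V \geq 6$ with $\kappa \neq 0$, the pairwise relations across three or more eigenspaces force all eigenvalues to coincide, reducing to case (3). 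A direct substitution confirms that each listed $R$ is K\"ahler and almost isotropic with the stated $\kappa$.
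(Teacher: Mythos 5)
Your overall architecture mirrors the paper's: first show that the Riemannian (non-K\"ahler) data alone forces $R=\kappa R_1+\tau R_A$ for some skew-symmetric $A$, then feed the K\"ahler symmetry into this normal form. But there are two places where you've flagged or elided a step that is in fact the hard content of the argument.

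The first and larger gap is exactly where you say it is: passing from the pointwise eigendata $v\mapsto(\lambda_v, \pm e_v)$ to a single global skew-symmetric endomorphism $A$. You propose to ``polarize'' the reciprocity relation and ``invoke Bianchi applied to $R-\kappa R_1$,'' but polarization of $\lambda_v\langle w,e_v\rangle^2=\lambda_w\langle v,e_w\rangle^2$ does not produce a bilinear object directly, because the unknown is the line field $v\mapsto\spn(e_v)$ itself, which enters quadratically. The actual obstruction is global and topological, not algebraic. The paper handles this by observing that $v\mapsto \ker(\mathcal{J}_v-\kappa\Id_{v^\perp})$ is a codimension-one \emph{totally geodesic} distribution on the unit sphere $S\subset V$ (Lemmas \ref{l1}--\ref{l2}), then invoking the Hangan--Lutz theorem (Theorem \ref{hl}, which rests on the fundamental theorem of projective geometry) to realize this distribution as $D[A]$ for a skew-symmetric $A$; an additional extension step (Theorem \ref{extend}) is needed to allow a singular set $\ker A\cap S$. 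Nothing resembling this appears in your sketch, and I don't see how the Bianchi identity alone could substitute for it -- at that stage you only control Jacobi operators, not mixed components like $R(x,y,z,w)$ with all four vectors distinct, which is what a Bianchi argument would need.

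The second gap is the assertion that ``a direct computation shows that $A$ commutes with $J$.'' The K\"ahler symmetry only gives, as a direct computation, the weaker dichotomy $AJ=JA$ \emph{or} $AJ=-JA$ (Lemma \ref{commute?} -- the key identity $\langle x,(A+JAJ)y\rangle\langle x,(A-JAJ)y\rangle=0$ yields a union of kernels, not an intersection). Ruling out the anticommuting branch is a genuinely separate argument: the paper shows that $AJ=-JA$ together with $\tau\ne 0$ forces $A^2=-(\kappa/\tau)\Id$ on a certain 4-plane, producing holomorphic sectional curvature $4\kappa$ against a sectional curvature range $[\kappa,4\kappa]$, and then derives a contradiction with Berger's mixed curvature estimate $|R(e_1,e_2,e_3,e_4)|\le \tfrac{2}{3}(\lambda-\kappa)$ (Lemmas \ref{skewcommute}--\ref{commute}). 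This is a curvature-pinching input, not a formal identity. Once $B=AJ$ is known to be symmetric and $J$-commuting, your eigenspace analysis and the relations $\tau\mu_i\mu_j=\kappa$ for non-holomorphic pairs are essentially the paper's Lemmas \ref{relations}--\ref{setup}, and the case split is sound; but you should be careful that the product relation applies to any orthonormal pair of eigenvectors \emph{not spanning a holomorphic plane}, including pairs within a single eigenspace of dimension $\ge 4$ (this is what forces a unique eigenvalue when $\dim V\ge 6$), not only to pairs drawn from distinct eigenspaces as your ``$i\ne j$'' phrasing suggests.
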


\begin{remark}\label{theoremnote}
Note that in (2), when $\mu_1=\mu_2:=\mu$, the equalities $A=\mu J$ and $R_A=\mu^2 R_J=\kappa/\tau R_J$ hold.  Therefore, if $\mu_1=\mu_2$, then $R=\kappa(R_1+R_J)$ as in (3).  
\end{remark}

Theorem \ref{kahleralgebraic} is proved in Section \ref{Kahler} using the K\"ahler symmetry and the following classification of almost isotropic algebraic curvature tensors obtained in Section \ref{classify}.  

\begin{mainthm}\label{algebraic}
Let $(V \langle \cdot, \cdot \rangle)$ be a real inner product space and let $R:V^3 \rightarrow V$ be an almost isotropic algebraic curvature tensor with associated constant $\kappa$.  Then either $R=\kappa R_1$ is isotropic, or there exists a constant $\tau=\pm1$ and a nonzero skew symmetric endomorphism $A:V \rightarrow V$ such that $R=\kappa R_1+\tau R_A$.  
\end{mainthm}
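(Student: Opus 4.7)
The plan is to reduce to the case $\kappa = 0$ and then classify algebraic curvature tensors $R$ whose Jacobi operators have rank at most one everywhere, showing they must vanish or take the form $\tau R_A$. Since $\mathcal{J}^{R_1}_v = \Id_{v^\perp}$ for every unit $v$, replacing $R$ by $R - \kappa R_1$ yields an algebraic curvature tensor whose Jacobi operators all satisfy $\rnk \leq 1$. So it suffices to prove that an almost isotropic algebraic curvature tensor with $\kappa = 0$ either vanishes or equals $\tau R_A$ for some $\tau \in \{\pm 1\}$ and nonzero skew-symmetric $A$.

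Assuming $R \neq 0$, the operator $\mathcal{J}_v$ (extended to all of $V$) is symmetric with $v$ in its kernel and rank at most one, so on the open homogeneous cone $\Omega = \{v \in V : \mathcal{J}_v \neq 0\}$ I may write
\[
\mathcal{J}_v \;=\; \varepsilon(v)\, \mu(v) \otimes \mu(v), \qquad \mu(v) \in v^\perp,\quad \varepsilon(v)\in\{\pm 1\},
\]
with $\mu$ chosen continuously (up to sign) on small neighborhoods. The goal is to upgrade this pointwise decomposition to a global one: to produce a linear map $B : V \to V$ and a fixed sign $\tau \in \{\pm 1\}$ such that $\mathcal{J}_v = \tau\,(Bv)\otimes (Bv)$ for every $v \in V$. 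Skew-symmetry of $B$ then follows from $\mathcal{J}_v(v)=0$, equivalent to $\langle Bv, v\rangle = 0$ for all $v$. Once $B$ is constructed, setting $A = B/\sqrt{3}$ gives $\mathcal{J}^{\tau R_A}_v = 3\tau(Av)\otimes(Av) = \mathcal{J}^R_v$, so $R - \tau R_A$ has vanishing Jacobi operator everywhere, and since sectional curvatures (hence Jacobi operators) determine an algebraic curvature tensor, $R = \tau R_A$.

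The technical heart is producing $B$ and $\tau$ globally -- showing that $\varepsilon$ is constant and that a consistent choice of $\mu$ is the restriction of a linear map. This is not automatic: generic quadratic rank-one polynomial maps can exhibit sign mixing and non-polynomial "square roots", and both possibilities must be ruled out using the curvature symmetries. My plan is to fix $v_0 \in \Omega$ and exploit the polarization identity
\[
R(w,v)v_0 + R(w,v_0)v \;=\; 2\, \mathcal{J}_{v,v_0}(w)
\]
together with the first Bianchi identity $R(w,v)v_0 - R(w,v_0)v = -R(v,v_0)w$ to express $\mathcal{J}_{v,v_0}(w)$ explicitly in terms of $R$. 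If the target form $\mathcal{J}_v = \tau\, \mu(v)\otimes\mu(v)$ with $\mu$ linear held, polarizing in $v$ would force
\[
\mathcal{J}_{v,v_0}(w) \;=\; \tfrac{\tau}{2}\bigl(\langle \mu(v), w\rangle \mu(v_0) + \langle \mu(v_0), w\rangle \mu(v)\bigr).
\]
Evaluating with $w = \mu(v_0)$ and projecting orthogonally to $\mu(v_0)$ defines a linear function of $v$ that must agree with the component of $\mu(v)$ perpendicular to $\mu(v_0)$; this identifies a candidate $B$, and varying $v_0$ together with the rank-one constraint at each $v$ should pin down $\mu(v) = Bv$ up to a single global sign $\tau$. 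The main obstacle is carrying out this consistency verification rigorously -- checking independence of the base point, eliminating sign flips, and confirming linearity on all of $V$ -- using only the algebraic curvature symmetries.
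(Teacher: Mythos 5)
Your initial reduction is fine: subtracting $\kappa R_1$ reduces to the case $\kappa=0$, which matches the paper's normalization $R_{\kappa,\tau,A}=\kappa R_1+\tau R_A$; and your target formula $\mathcal{J}_v = \tau (Bv)\otimes(Bv)$ with $B$ skew-symmetric is exactly the right thing to prove. However, the proposal stops precisely where the real work begins: you explicitly defer ``the technical heart'' of showing that the pointwise rank-one data $\varepsilon(v)\,\mu(v)\otimes\mu(v)$ glues to a single linear map $B$ with a single sign $\tau$. That globalization is the entire content of the theorem, and it is not at all clear the polarization-plus-Bianchi scheme you sketch can deliver it. A rank-one-valued quadratic map $v\mapsto \mathcal{J}_v$ need not a priori be $\pm(Bv)\otimes(Bv)$; the direction $\mu(v)$ could in principle fail to be a linear function of $v$, and the sign $\varepsilon(v)$ could flip across the zero locus of $\mathcal J$. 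You correctly flag both pitfalls, but offer only a heuristic (``varying $v_0$ together with the rank-one constraint \ldots\ should pin down $\mu(v)=Bv$'') rather than an argument. As written, the key implication is asserted, not proved.

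For contrast, the paper does not attempt a direct algebraic polarization. Instead it packages the rank-one condition geometrically: the assignment $s\mapsto E_s:=\ker(\mathcal{J}_s-\kappa\Id_{s^\perp})$ is shown (Lemma \ref{l2}) to be a codimension-one \emph{totally geodesic} distribution on the unit sphere $S$, using only that great circles spanning a plane of curvature $\kappa$ stay tangent to $E$. It then invokes the classification of such distributions: the Hangan--Lutz theorem (Theorem \ref{hl}) in the nonsingular case, extended in Section \ref{totgeo} (Theorem \ref{extend}) to allow a singular set. This produces the skew-symmetric $A$ with $E=D[A]$ all at once, globally, and with the correct structure; the remaining work (Lemmas \ref{jacobi}--\ref{lam}) is to match eigenvalues via the continuous function $\lambda$ and extract a constant $\tau$. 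The distribution classification is exactly the device that rules out the non-linear $\mu$ and sign-flip scenarios you worry about, and it is a genuinely substantive input (Hangan--Lutz itself rests on the fundamental theorem of projective geometry). Your proposal would need either to prove a comparable globalization statement from scratch or to find a genuinely different route; as it stands, the proof has a gap at its central step.
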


Almost isotropic algebraic curvature tensors give rise to totally geodesic tangent distributions on unit spheres of codimension at most one as will be explained.  A classification of such distributions is carried out in Section \ref{totgeo}.  The concluding Section \ref{proof examples} consists of the proof of Theorem \ref{main}. 

\section{Codimension one totally geodesic distributions on unit spheres.} \label{totgeo}
Codimension one totally geodesic distributions on unit spheres are classified in this section.  These distributions arise in the study of almost isotropic algebraic curvature tensors undertaken in the next section.  

Subspaces in a distribution, as defined here, may have varying dimensions and may not vary continuously.
\begin{definition}
Let $X$ be a smooth manifold.  

\begin{enumerate}
\item A \textit{distribution} $D$ on $X$ is an assignment to each point $x \in X$ a subspace of $T_xX$.  Such an assignment is denoted by $D:X \ni x \mapsto D_x \subset T_xX.$  \item The \textit{dimension} of $D$, denoted by $\dim(D)$, is the minimum value of the function $x \mapsto \dim(D_x)$; its \textit{codimension} is $\dim(X)-\dim(D)$. 
\item The \textit{singular set} of $D$ is the subset $\mathcal{X}=\{x \in X\, \vert\, \dim(D_x) >\dim(D)\}.$  The distribution $D$ is \textit{nonsingular} when $\mathcal{X}=\emptyset.$
\item The distribution $D$ is \textit{totally geodesic} with respect to a Riemannian metric on $X$ if each geodesic in $X$ is either everywhere or nowhere tangent to $D$.
\end{enumerate}
\end{definition}

Let $(V,\langle \cdot, \cdot \rangle)$ be a $d$-dimensional inner product space and $S$ its unit sphere equipped with the induced Riemannian metric.  For each $s \in S$, parallel translation in $V$ defines an isomorphism between $s^{\perp}$ and $T_sS$.  This isomorphism is used without further mention below.  

Given an endomorphism $A \in \mathcal{L}(V,V)$, let $[A]=\{\lambda A\, \vert\, \lambda \in \mathbf{R},\,\,\, \lambda \neq 0\}$ denote its projective class.  Each skew symmetric projective class $[A]$ gives rise to a totally geodesic distribution $D[A]$ on $S$ of codimension at most one that we now describe.

Fix $A\in [A]$.  As $A$ is skew symmetric, $\langle v, Av \rangle=0$ for each $v \in V$.  Given $s \in S$, let $D[A]_s:=\spn(s,As)^{\perp}=\{w \in V\, \vert\, \langle w,s \rangle=\langle w, As \rangle =0\}.$ The assignment $D[A]: S \ni s \mapsto D[A]_s \subset T_vS$ defines a distribution of codimension at most one on $S$.  The distribution $D[A]$ only depends on the projective class $[A]$ and has codimension one if and only if $[A]\neq [0]$.  If $[A] \neq 0$, then $D[A]$ has singular set $\mathcal{X}=\{s \in S\, \vert \, D_s=T_sS\}=\ker(A) \cap S$.

As $A$ is skew symmetric, the vector field $Z: S \ni s \mapsto Z_s:=As \in T_v S$ is a Killing field.  The following lemma explains why $D[A]$ is totally geodesic.

\begin{lemma}\label{l3}
Let $Z$ be a Killing field on a complete Riemannian manifold $(X,\g)$.  If a geodesic $c:\mathbf{R} \rightarrow X$ satisfies $\g(\dot{c}(0),Z(c(0)))=0$, then $\g(\dot{c}(t),Z(c(t)))=0$ for all $t \in \R$.
\end{lemma}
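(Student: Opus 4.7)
The plan is to reduce the statement to the elementary fact that along a geodesic, the inner product of the velocity with a Killing field is constant. Concretely, I would consider the smooth function
\[
f: \mathbf{R} \to \mathbf{R}, \qquad f(t) = \g\bigl(\dot c(t),\, Z(c(t))\bigr),
\]
and show $f'\equiv 0$, so that $f(0)=0$ forces $f\equiv 0$.

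To compute $f'(t)$, I would apply metric compatibility of the Levi-Civita connection, extending $\dot c$ and $Z$ to fields along $c$ in the usual way:
\[
f'(t) = \g\bigl(\nabla_{\dot c}\dot c,\, Z(c(t))\bigr) + \g\bigl(\dot c(t),\, \nabla_{\dot c} Z\bigr).
\]
The first term vanishes because $c$ is a geodesic, so $\nabla_{\dot c}\dot c=0$. For the second term, I would invoke the characterization of Killing fields: $Z$ is Killing if and only if the $(1,1)$-tensor $X\mapsto \nabla_X Z$ is pointwise skew symmetric with respect to $\g$. Applied to $X=\dot c(t)$, this gives
\[
\g\bigl(\dot c(t),\, \nabla_{\dot c(t)} Z\bigr) = 0.
\]
Hence $f'\equiv 0$, and the conclusion follows from the initial condition $f(0)=0$.

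There is no substantial obstacle here; the only subtlety is recalling the correct form of the Killing equation, namely the pointwise skew symmetry of $\nabla Z$, rather than the more familiar statement involving the Lie derivative of $\g$. Completeness of $(X,\g)$ is used only to guarantee that the geodesic $c$ is defined on all of $\mathbf{R}$, so that the conclusion makes sense as stated; the argument itself is purely local along $c$.
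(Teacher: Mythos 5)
Your proof is correct and follows essentially the same approach as the paper: differentiate $\g(\dot c, Z)$ along the geodesic, kill the first term via $\nabla_{\dot c}\dot c = 0$, and kill the second via skew-symmetry of $\nabla Z$. The paper's version is just a one-line compression of exactly this computation.
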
  

\begin{proof}
As $c(t)$ is a geodesic and $Z$ is a Killing field, $\dot{c}\,\g(\dot{c},Z)=\g(\nabla_{\dot{c}} \dot{c}, Z)+\g(\dot{c}, \nabla_{\dot{c}} Z)=0+0=0.$
\end{proof}

The fundamental theorem of projective geometry is used in \cite{halu} to establish that all \textit{nonsingular} codimension one totally geodesic distributions on unit spheres arise in this fashion.  

\begin{theorem}\label{hl}[Hangan and Lutz]
If $D$ is a nonsingular codimension one totally geodesic distribution on the unit sphere $S$, then there exists a projective class $[A]$ of nondegenerate skew symmetric endomorphisms such that $D=D[A]$.  
\end{theorem}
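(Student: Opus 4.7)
The plan is to associate to $D$ a self-map of $\mathbf{RP}(V)$, apply the fundamental theorem of projective geometry (FTPG), and read off a skew symmetric endomorphism.

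\emph{Setup.} For each $s \in S$, let $L_s \subset T_sS = s^\perp$ be the one-dimensional orthogonal complement of $D_s$; nonsingularity of $D$ ensures $\dim L_s = 1$. Applying the totally geodesic hypothesis to a horizontal great circle through $s$ and $-s$ forces $D_{-s} = D_s$, so $L_{-s} = L_s$ and the assignment
\[
\phi : \mathbf{RP}(V) \longrightarrow \mathbf{RP}(V), \qquad [s] \longmapsto [L_s]
\]
is well-defined. Evaluating the horizontal geodesic $c(t) = \cos(t)s + \sin(t)v$ (with $v \in D_s$) at $t = \pi/2$ yields the useful duality $v \in D_s \iff s \in D_v$ for orthogonal unit vectors $s,v$.

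\emph{Collineation.} I would next show $\phi$ sends projective lines to projective lines. Fix a two-plane $P \subset V$. The totally geodesic hypothesis makes the great circle $P \cap S$ either everywhere tangent to $D$ (horizontal) or nowhere tangent. In the horizontal case, any smooth lift $\nu(t) \in L_{c(t)}$ along $c$ satisfies $\langle \nu, c \rangle = \langle \nu, \dot c \rangle = 0$; differentiating and using $\ddot c = -c$ gives $\dot\nu, \ddot\nu \in P^\perp$. A bootstrap exploiting the totally geodesic condition on horizontal circles transverse to $c$ at each basepoint, together with the duality, promotes these orthogonality relations to the closed ODE $\ddot\nu + \nu = 0$ after a suitable normalization, so $\nu(t) \in \spn(\nu(0), \dot\nu(0)) \subset P^\perp$ and $\phi(\mathbf{RP}(P))$ is a projective line in $\mathbf{RP}(P^\perp)$. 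The non-horizontal case reduces to the horizontal one via the duality.

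\emph{FTPG and conclusion.} Since $\dim \mathbf{RP}(V) = d - 1 \geq 2$ (the theorem is vacuous for $d \leq 3$ by the hairy ball theorem), FTPG yields a linear isomorphism $A : V \to V$ with $\phi([s]) = [A s]$, i.e., $L_s = \mathbf{R} \cdot A s$. The inclusion $A s \in L_s \subset s^\perp$ gives $\langle A s, s \rangle = 0$ for all $s$; polarizing yields $A + A^{*} = 0$, so $A$ is skew symmetric. Nonsingularity of $D$ forces $A$ to have trivial kernel, hence to be nondegenerate. By construction $D = D[A]$.

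The main obstacle is the collineation step: converting the pointwise totally geodesic condition into the rigid global projective structure on $\mathbf{RP}(V)$. The orthogonality relations alone force only $\nu, \dot\nu, \ddot\nu \in P^\perp$, and the delicate technical point is to promote this to the closed ODE $\ddot\nu + \nu = 0$---equivalently, to show that $\nu(t)$ lies in a fixed two-plane, which is morally the statement that $D$ is preserved by a one-parameter flow of isometries of $S$ whose orbits integrate $L$.
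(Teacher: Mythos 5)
The paper does not prove this theorem; it is cited directly from Hangan and Lutz, with the remark that their argument goes through the fundamental theorem of projective geometry (FTPG). Your strategy therefore matches the route the paper attributes to the original authors, and several pieces of your setup are correct: the antipodal invariance $D_{-s}=D_s$, the orthogonality duality $v\in D_s\iff s\in D_v$ (this is exactly the paper's Lemma~\ref{sym}), the observation that $L_s\subset s^\perp$ forces $\langle As,s\rangle=0$ and hence skew-symmetry after polarization, and the derivation of nondegeneracy from nonsingularity.

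However, there is a genuine gap, and it sits precisely where the real content of the theorem lives. You reduce everything to the claim that $\phi:[s]\mapsto[L_s]$ is a collineation, and you correctly identify that in the horizontal case the needed statement is that $\nu(t)\in L_{c(t)}$ stays in a \emph{fixed} two-plane inside $P^\perp$. The observations you actually establish --- that $\nu,\dot\nu,\ddot\nu\in P^\perp$ --- are automatic consequences of $\nu(t)\in P^\perp$ and carry no information beyond $\phi(\mathbf{RP}(P))\subset\mathbf{RP}(P^\perp)$, which is far weaker than a projective line. The ``bootstrap'' that is supposed to upgrade this to $\ddot\nu+\nu=0$ is asserted but not carried out, and the ``non-horizontal case reduces to the horizontal one via the duality'' is likewise unsubstantiated; the duality you proved relates membership in $D_s$ and $D_v$, not membership in the lines $L_s$ and $L_v$, and in particular does \emph{not} yield $\phi^2=\mathrm{Id}$ (in the model $D[A]$ one has $\phi^2=[A^2]$, which is the identity only when $A^2$ is scalar). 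Two further points are left unaddressed and would need attention before FTPG can be invoked: (i) bijectivity (or at least injectivity plus non-degeneracy) of $\phi$, which the classical FTPG requires, and (ii) regularity of the line field $L$, which you differentiate freely but which is not given as a hypothesis --- the paper's definition of a distribution permits discontinuous assignments, and the eigenspace distributions to which Theorem~\ref{extend} is ultimately applied are not a priori continuous. In short: the architecture is right and agrees with the cited source, but the collineation step --- the heart of the Hangan--Lutz theorem --- is missing rather than proved.
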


Skew symmetric endomorphisms have even rank.  It follows that $d=\dim(V)$ is even in Theorem \ref{hl}.  Theorem \ref{extend} below extends Theorem \ref{hl} to allow nonempty singular sets $\mathcal{X}$.  We begin with two lemmas. 

\begin{lemma}\label{sym}
Let $D$ be a totally geodesic distribution on $S$.  If $x, y \in S$ and $x \in D_y$, then $y \in D_x$.
\end{lemma}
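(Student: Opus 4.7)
The plan is to exploit the explicit description of geodesics on the round sphere $S$ together with the totally geodesic hypothesis on $D$. Under the identification $T_yS \cong y^{\perp} \subset V$, the hypothesis $x \in D_y$ entails $\langle x, y \rangle = 0$; in particular $x$ and $y$ are linearly independent unit vectors, since a collinearity $x = \pm y$ would force the contradiction $\langle y, y \rangle = 0$.

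With this orthogonality in hand, the great circle $c : \mathbf{R} \to S$ defined by $c(t) = \cos(t)\, y + \sin(t)\, x$ is a unit-speed geodesic with $c(0) = y$ and $\dot{c}(0) = x \in D_y$. Because $D$ is totally geodesic, the definition forces $\dot{c}(t) \in D_{c(t)}$ for every $t \in \mathbf{R}$. Evaluating at $t = \pi/2$ yields $c(\pi/2) = x$ and $\dot{c}(\pi/2) = -y$, so that $-y \in D_x$. Since $D_x$ is a subspace of $T_xS$, it is closed under scalar multiplication, and therefore $y \in D_x$, as required.

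Since geodesics on $S$ are completely explicit as great circles, this argument is immediate and there is no real technical obstacle; the content of the lemma is simply the observation that reversing the roles of the two endpoints of the quarter great-circle arc from $y$ to $x$ exchanges the tangency data $(y, x)$ and $(x, -y)$, while the totally geodesic hypothesis transports tangency along the whole geodesic.
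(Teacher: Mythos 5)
Your proof is correct and is essentially identical to the paper's: both use the great circle $c(t)=\cos(t)\,y+\sin(t)\,x$, observe $\dot{c}(0)=x\in D_y$, invoke the totally geodesic hypothesis to get $\dot{c}(\pi/2)=-y\in D_x$, and conclude $y\in D_x$ since $D_x$ is a subspace.
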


\begin{proof}
As $x \in D_y$, $\langle x, y \rangle=0$.  Consider the geodesic $c(t)=\cos(t)y+\sin(t)x$.  As $\dot{c}(0)=x \in D_y=D_{c(0)}$, $-y=\dot{c}(\pi/2) \in D_{c(\pi/2)}=D_x$.  The conclusion holds since $D_x$ is a subspace. 
\end{proof}

\begin{lemma}\label{sphere}
Let $D$ be a codimension one totally geodesic distribution on the unit sphere $S$ with singular set $\mathcal{X}$.  Let $K=\spn(\mathcal{X})$ and $M=K^{\perp}$ so that $V=K\oplus M$.  Let $S_K=K \cap S$ and $S_M=M \cap S$. 

\begin{enumerate}
\item If $m \in S_M$, then $K \subset D_m$.
\item If $m \in S_M$, then $D_m\cap T_mS_M$ has codimension one in $T_m S_M$.
\item If $k \in S_K$, then  $M\subset D_k$.
\item The singular set satisfies $\mathcal{X}=S_K$. 
\item Let $k \in S_K$, $m \in S_M$, and $T \in (0,\pi/2)$.  If $s=\cos(T)k+\sin(T)m$, then $$D_s=(k^{\perp}\cap K) \oplus (D_m \cap T_m S_M)\oplus \spn(-\sin(T)k+\cos(T)m).$$
\end{enumerate}
\end{lemma}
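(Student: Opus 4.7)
The plan is to exploit two tools repeatedly: Lemma~\ref{sym}, which gives the symmetric incidence $x \in D_y \iff y \in D_x$; and the totally geodesic property applied to great circles through a point $x \in \mathcal{X}$, where $D_x = T_xS$ forces every initial velocity---hence every velocity along the circle---to lie in $D$.

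I would address the parts in order. For (1), each $x \in \mathcal{X} \subset K$ is orthogonal to each $m \in M$, so $m \in x^\perp = D_x$, and Lemma~\ref{sym} transposes this to $x \in D_m$; spanning over $x \in \mathcal{X}$ yields $K \subset D_m$. Part (3) is the same argument run in reverse: given $k \in S_K$ and $m \in S_M$, (1) supplies $k \in D_m$, and Lemma~\ref{sym} converts this to $m \in D_k$. For (2), the point $m \in S_M$ cannot be singular (it lies outside $K$), so $D_m$ has codimension exactly one in $T_mS$; combined with $K \subset D_m$ from (1), the orthogonal decomposition $T_mS = K \oplus T_mS_M$ forces $\dim(D_m \cap T_mS_M) = \dim T_mS_M - 1$.

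Part (4) is the main obstacle. One inclusion $\mathcal{X} \subset S_K$ is immediate. For the reverse, I would fix $k \in S_K$ and show $K \cap k^\perp \subset D_k$, which combined with $M \subset D_k$ from (3) yields $D_k = T_kS$. If $\dim K \leq 1$ this intersection is trivial; otherwise $\mathcal{X}$ must contain some $x \notin \spn(k)$, and I would follow the great circle through $x$ and $k$ in the plane $\spn(x,k)$. Since it starts at $x \in \mathcal{X}$ with initial velocity automatically in $D_x = T_xS$, its velocity remains in $D$ throughout by the totally geodesic property; a short calculation shows that evaluating at $k$ yields a nonzero multiple of $\pi^{k^\perp}(x)$, which therefore lies in $D_k$. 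Varying $x$ over $\mathcal{X}$ and using $\pi^{k^\perp}(K) = K \cap k^\perp$ supplies the missing inclusion. I expect this to be the hard step because it is the only one that must exploit multiple singular points simultaneously rather than a single one.

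For (5), using (4) we have $s \notin \mathcal{X}$ (since $s \notin K$ as $\sin T \neq 0$), so $\dim D_s = d-2$; because the three pairwise orthogonal summands on the right also have total dimension $d-2$, it would suffice to show each summand lies in $D_s$. For the third summand, $\dot c(T) = -\sin(T)k + \cos(T)m$ is the velocity of the geodesic $c(t) = \cos(t)k + \sin(t)m$, which lies in $D$ throughout because $\dot c(0) = m \in D_k$ by (3). For the first summand, each unit $v \in K \cap k^\perp$ lies in $S_K = \mathcal{X}$ by (4), so $D_v = v^\perp \ni s$, and Lemma~\ref{sym} yields $v \in D_s$. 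For the second summand, each $w \in D_m \cap T_mS_M$ satisfies $m \in D_w$ (Lemma~\ref{sym}) and $k \in D_w$ (since $k \in \mathcal{X}$ gives $D_k = k^\perp \ni w$, and Lemma~\ref{sym} again); linearity then places $s = \cos(T)k + \sin(T)m$ in $D_w$, and Lemma~\ref{sym} once more gives $w \in D_s$.
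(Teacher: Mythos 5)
Your proof is correct and follows essentially the same route as the paper: parts (1)--(3) via orthogonality plus the symmetry Lemma~\ref{sym}, part (4) by tracing great circles from singular points $x \in \mathcal{X}$ to $k$ and using that the velocity at $k$ is the projection $\pi^{k^\perp}(x)$, and part (5) by dimension count together with a case-by-case verification via Lemma~\ref{sym} and the totally geodesic property. The only cosmetic difference is in justifying the third summand of (5), where you invoke part (3) ($m \in D_k$) while the paper simply notes $c(0) = k \in \mathcal{X}$ so $D_k = T_kS$; both observations work.
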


\begin{proof}
\noindent \textit{(1).}  Fix $m \in S_M$. Given $x \in \mathcal{X}$, $m \in x^{\perp}$ and so $m \in D_x$. By Lemma \ref{sym}, $x \in D_m$.  Conclude that $K=\spn(\mathcal{X}) \subset D_m$. \vskip 7pt

\noindent \textit{(2).}  If $m \in S_M$, then $T_mS=K \oplus T_m S_M$.  As $m \notin \mathcal{X}$, $D_m$ has codimension one in $T_mS$.  Assertion (1) now implies assertion (2).\vskip 7pt

\noindent \textit{(3).} Fix $k \in S_K$.  Given $m \in S_M$, $k \in D_m$ by (1).  By Lemma \ref{sym}, $m \in D_k$.  Conclude that $M=\spn(S_M)\subset D_k$. \vskip 7pt

\noindent \textit{(4).}  Assertion (4) holds trivially when $\mathcal{X}=\emptyset$, so assume $\mathcal{X} \neq \emptyset.$ If $k \in S_K$, then $T_k S=T_kS_K \oplus M$.  By (3), $M \subset D_k$.  It remains to prove that $T_kS_K \subset D_k$.  Let $\{x_1,\ldots, x_k\}\subset  \mathcal{X}$ be a spanning set for $K$.  For each index $i$, let $v_i=x_i-\langle x_i,k\rangle k \in T_k S_K$ and note that $\spn(v_1,\ldots,v_k)=k^{\perp} \cap K=T_kS_K$.  As each $v_i$ is tangent to a geodesic that joins $k$ to $x_i \in \mathcal{X}$, each $v_i \in D_k$.  The conclusion follows since $D_k$ is a subspace. \vskip 7pt 

\noindent \textit{(5).} As $T \in (0,\pi/2)$, $s \notin K$ and so $\dim(D_s)=d-2$.  The summands in (5) are each orthogonal to $s$ and are pairwise orthogonal.  Therefore, it suffices to prove that each is a subspace of $D_w$ and that their dimensions sum to $d-2$.  

The first summand $k^{\perp} \cap K$ has dimension $\dim(K)-1$.  If $x \in (k^{\perp} \cap S_K)$, then by (4), $x \in \mathcal{X}$.  Therefore, $s \in D_x$ and by Lemma \ref{sym}, $x \in D_s$.  The containment $(k^{\perp}\cap K) \subset D_s$ follows.

The second summand $D_m \cap T_m S_M$ has codimension one in $T_m S_M$ by (2) and so has dimension $\dim(M)-2$.  Let $y$ be a unit vector in $D_m \cap T_m S_M$.  Then $y \in k^{\perp}$, and by (4), $y \in D_k$. By Lemma \ref{sym}, $k \in D_y$.  As $y \in D_m$, Lemma \ref{sym} implies $m \in D_y$.  Conclude that $\spn(k,m) \subset D_y$ whence $s \in D_y$ and $y \in D_s$.  The containment $(D_m \cap T_m S_W) \subset D_s$ follows.

The third summand $\spn(-\sin(T)u+\cos(T)w)$ is one dimensional.  The geodesic $c(t)=\cos(t)k+\sin(t)m$ satisfies $c(0)=k \in \mathcal{X}$.  Therefore $\dot{c}(T) \in D_{c(T)}$ and $\spn(-\sin(T)u+\cos(T)w) \subset D_s$.

The dimensions of the summands add to $(\dim(K)-1)+(\dim(M)-2)+1=\dim(V)-2=d-2$, concluding the proof.  
\end{proof}

\begin{theorem}\label{extend}
If $D$ is a codimension one totally geodesic distribution on the unit sphere $S$, then there exists a projective class $[A]$ of nonzero skew symmetric linear maps such that $D=D[A]$.
\end{theorem}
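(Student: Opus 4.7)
The plan is to reduce to the nonsingular case of Theorem \ref{hl} by restricting $D$ to the unit sphere $S_M$ of the complement $M = K^{\perp}$, where $K = \spn(\mathcal{X})$, and then to propagate the resulting endomorphism back to all of $V$ in a controlled way. First I would observe that if $\mathcal{X} = \emptyset$ the theorem is immediate from Theorem \ref{hl}, so I can assume $\mathcal{X} \neq \emptyset$. A short argument using Lemma \ref{sphere}(1) and the codimension-one hypothesis rules out the degenerate situations $M = 0$ and $\dim(M) = 1$: in the first case $\mathcal{X}$ would fill all of $S$, and in the second the containment $K \subset D_{m_0}$ for $m_0 \in S_M$ would force $D_{m_0}$ to have dimension $d-1$ rather than $d-2$.

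Next I would define a distribution $D'$ on $S_M$ by $D'_m = D_m \cap T_m S_M$. By Lemma \ref{sphere}(2) this has codimension one in $T_m S_M$ for every $m \in S_M$, so it is nonsingular. It is also totally geodesic on $S_M$: any geodesic of $S_M$ is a geodesic of $S$, and if its initial velocity lies in $D'_{c(0)} \subset D_{c(0)}$ then the total-geodesic property of $D$ on $S$ keeps it in $D_{c(t)}$, while it automatically remains tangent to $S_M$. Theorem \ref{hl} then supplies a nondegenerate skew symmetric endomorphism $A' : M \to M$ with $D' = D[A']$, and in particular $\dim(M)$ is even. I would then extend $A'$ to a skew symmetric endomorphism $A$ on $V = K \oplus M$ by declaring $A|_K = 0$, and aim to show $D = D[A]$.

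The verification $D_s = D[A]_s$ splits into three cases using the decomposition of $s \in S$ with respect to $K \oplus M$. If $s \in S_K$, then $As = 0$, so $D[A]_s = s^{\perp} = T_sS$, and by Lemma \ref{sphere}(4) we have $s \in \mathcal{X}$, so $D_s = T_sS$ also. If $s \in S_M$, a direct check shows $D[A]_s = K \oplus (D'_s)$: the $K$ summand is orthogonal to both $s$ and $As = A'(s) \in M$, while the part of $D[A]_s$ inside $M$ is exactly $T_s S_M \cap (A's)^{\perp} = D'_s$; combined with Lemma \ref{sphere}(1) this matches $D_s$, which has the same dimension $d-2$. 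The remaining case $s = \cos(T)k + \sin(T)m$ with $k \in S_K$, $m \in S_M$, $T \in (0,\pi/2)$, is handled by checking that each of the three summands in Lemma \ref{sphere}(5) is orthogonal to both $s$ and $As = \sin(T) A'm$; this uses only that $K \perp M$, that $A'$ is skew symmetric, and that $D'_m = D[A']_m$. Dimension count $(d-2)$ on both sides then gives equality.

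The work is essentially bookkeeping once Theorem \ref{hl} is invoked on $S_M$; the only conceptual step is choosing the extension $A|_K = 0$, which is forced by the requirement that $As$ vanish precisely on $S_K = \mathcal{X}$. The main potential obstacle I anticipate is correctly identifying the summand structure in the mixed case, i.e., confirming that extending $A'$ by zero does not introduce spurious vectors into $D[A]_s$ beyond those coming from Lemma \ref{sphere}(5); the orthogonality calculations above are what make this go through.
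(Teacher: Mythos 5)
Your proposal is correct and follows essentially the same route as the paper: restrict $D$ to $S_M$ to get a nonsingular codimension-one totally geodesic distribution, invoke Theorem \ref{hl} there, extend the resulting endomorphism by zero on $K$, and verify $D = D[A]$ case-by-case using Lemma \ref{sphere}. The only addition is your explicit ruling out of the degenerate cases $M=0$ and $\dim M = 1$, which the paper leaves implicit; otherwise the two arguments coincide.
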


\begin{proof}
Adopt the notation of Lemma \ref{sphere}.  Define a distribution $\hat{D}$ on $S_M$ by $\hat{D}_m:=D_m \cap T_m S_M$ for each $m \in S_M$.  By Lemma \ref{sphere}-(2), $\hat{D}$ is a non-singular, codimension one distribution on $S_M$.  The distribution $\hat{D}$ is totally geodesic since the distribution $D$ and the sphere $S_M$ are totally geodesic.  By Theorem \ref{hl}, there is a nondegenerate skew symmetric linear map $\hat{A}:M \rightarrow M$ such that $\hat{D}=D[\hat{A}].$ Extend $\hat{A}$ to a nonzero skew symmetric linear map $A:V \rightarrow V$ defined by $A(k+m):=\hat{A}m$ for each $k+m \in K\oplus M=V$. 

The Theorem holds if the equality of subspaces $D_s=\spn(s,As)^{\perp}$ holds for each $s \in S$.  The desired equality holds for $s \in S_K$ by Lemma \ref{sphere}-(4) and the equality $K=\ker A$.  The desired equality holds for $s \in S_M$ since Lemma \ref{sphere}-(1) implies  
$$D_s=K\oplus \hat{D}_s=
K \oplus (\spn(s,\hat{A}s)^{\perp} \cap M)=\spn(s,As)^{\perp}.$$   For the remaining $s \in S$, there exists $k \in K$, $m\in M$, and $T \in (0,\pi/2)$ such that $s=\cos(T)k+\sin(T)m$.  As $\dim(s,As)^{\perp}=d-2=\dim(D_s),$ it suffices to prove that each of the three summands in Lemma \ref{sphere}-(5) are perpendicular to $As$. 

Vectors in $k^{\perp} \cap K$ are perpendicular to $As$ since $M$ is the image of  $A$.  Vectors in $D_m \cap T_mS_M=\hat{D}_m$ are perpendicular to the vector $\hat{A}m$ and therefore perpendicular to $As=\sin(T)\hat{A}m$. Finally, $\langle As,-\sin(T)k+\cos(T)m \rangle=\sin(T)\cos(T)\langle \hat{A}m,m\rangle=0,$ concluding the proof. 
\end{proof}

\section{Almost isotropic algebraic curvature tensors.} \label{classify} 
Almost isotropic algebraic curvature tensors are classified in this section.  As in the previous section,  $(V, \langle \cdot, \cdot \rangle)$ denotes a $d$-dimensional real inner product space with unit sphere $S$.




\begin{lemma}\label{model}
Given a skew symmetric endomorphism $A:V \rightarrow V$ and constants $\kappa, \tau \in \mathbf{R}$, the tensor $R_{\kappa, \tau, A}:=\kappa R_1+\tau R_A$ is an almost isotropic  algebraic curvature tensor with associated constant $\kappa$.  More precisely, if $s \in S$ and $w \in s^{\perp}$, then $$\mathcal{J}_s(w)=\kappa w + 3\tau\langle w, As \rangle As.$$
\end{lemma}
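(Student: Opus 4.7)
The plan is essentially a direct calculation. Since the text has already observed that $R_1$ and $R_A$ are algebraic curvature tensors, any real linear combination is as well, so $R_{\kappa, \tau, A}$ automatically satisfies the curvature symmetries. The content of the lemma reduces to the formula for $\mathcal{J}_s(w)$, and the rank assertion will be immediate from that formula.

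To compute $\mathcal{J}_s(w) = R_{\kappa, \tau, A}(w, s)s$, I would handle the two summands separately. For the first, using $s \in S$ and $w \in s^{\perp}$,
\[
R_1(w, s)s = \langle s, s\rangle w - \langle w, s\rangle s = w.
\]
For the second, I would substitute $(x, y, z) = (w, s, s)$ in the definition of $R_A$, giving
\[
R_A(w, s)s = 2\langle w, As\rangle As + \langle w, As\rangle As - \langle s, As\rangle Aw.
\]
The last term vanishes because $A$ is skew symmetric, hence $\langle s, As\rangle = 0$. This leaves $R_A(w, s)s = 3\langle w, As\rangle As$. Combining the two computations yields $\mathcal{J}_s(w) = \kappa w + 3\tau\langle w, As\rangle As$, as claimed.

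For the almost isotropy, I would first note that $As \in s^{\perp}$ (again by skew symmetry of $A$), so $w \mapsto 3\tau \langle w, As\rangle As$ is a well-defined endomorphism of $s^{\perp}$. Its image lies in $\spn(As)$, which is at most one dimensional, so
\[
(\mathcal{J}_s - \kappa \Id_{s^{\perp}})(w) = 3\tau\langle w, As\rangle As
\]
has rank at most one. As this holds for every unit $s \in S$, the tensor $R_{\kappa, \tau, A}$ is almost isotropic with associated constant $\kappa$.

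There is no real obstacle here; the only things to be careful about are keeping track of signs in the definition of $R_A$ and invoking the skew symmetry of $A$ in the two places where it enters (to kill the $\langle s, As\rangle$ term and to ensure $As \perp s$).
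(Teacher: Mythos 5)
Your proof is correct and follows exactly the route the paper intends — the paper's proof is the one-line instruction ``verify using the formulae for $R_1$ and $R_A$,'' and your computation of $R_1(w,s)s = w$ and $R_A(w,s)s = 3\langle w, As\rangle As$ (using skew symmetry to kill $\langle s, As\rangle$) is precisely that verification, with the rank-$\leq 1$ observation following immediately since the image of $\mathcal{J}_s - \kappa\Id_{s^\perp}$ lies in $\spn(As)$.
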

\begin{proof}
Verify using the formulae for $R_1$ and $R_A$ given in the introduction.
\end{proof}

\begin{convention}\label{iso}
The curvature tensor  $R_{\kappa,\tau, A}$ is isotropic if and only if $\tau A=0.$  If $R_{\kappa, \tau, A}$ is not isotropic, then $$R_{\kappa, \tau, A}=R_{\kappa, \frac{\tau}{|\tau|}, \sqrt{|\tau|}A }.$$  In the remainder of this section, we will always normalize so that $\tau \in \{-1, 0, 1\}$, and in addition, so that $\tau=0$ if and only if $A=0$.
\end{convention}

Theorem \ref{algebraic} above asserts that every almost isotropic algebraic curvature tensor is of the form $R_{\kappa, \tau, A}$ for some $\tau \in  \{-1,0,1\}$ and skew symmetric endomorphism $A$.  As preparation for its proof, let $R$ denotes a fixed almost isotropic algebraic curvature tensor on $V$ with associated constant $\kappa$.

\begin{definition}
The \textit{eigenspace distribution} of $R$ is the distribution $E$ on $S$ defined by $E:S \ni s \mapsto \ker(\mathcal{J}_s-\kappa \Id_{s^{\perp}}) \subset T_sS.$ 
 \end{definition}
 
The eigenspace distribution of $R$ is of codimension at most one and is of codimension one if and only if $R$ is \textit{not} isotropic.

\begin{lemma}\label{l1}
For an orthonormal pair $\{v,w\} \subset S$, $\sec(v,w)=\kappa$ if and only if $v \in E_w$.  
\end{lemma}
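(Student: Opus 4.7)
The plan is to exploit the symmetry of $\mathcal{J}_w$ together with the almost isotropic hypothesis on $R$ by writing the rank-at-most-one perturbation $\mathcal{J}_w - \kappa \Id_{w^{\perp}}$ in a spectral normal form. Since this perturbation is symmetric with rank $\leq 1$, the spectral theorem yields a unit vector $u \in w^{\perp}$ and a scalar $\lambda \in \mathbf{R}$ such that
\[
\mathcal{J}_w(x) \;=\; \kappa\, x \;+\; \lambda \langle x, u\rangle\, u \qquad \text{for every } x \in w^{\perp},
\]
where we permit $\lambda = 0$ in the isotropic case $\mathcal{J}_w = \kappa \Id_{w^{\perp}}$. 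Reading off the kernel, $E_w = \{x \in w^{\perp} : \lambda \langle x,u\rangle = 0\}$, which is either $u^{\perp} \cap w^{\perp}$ (when $\lambda \neq 0$) or all of $w^{\perp}$ (when $\lambda = 0$).

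Given an orthonormal pair $\{v,w\}$, the curvature symmetries and the definition of $\mathcal{J}_w$ give
\[
\sec(v,w) \;=\; \langle R(v,w)w, v\rangle \;=\; \langle \mathcal{J}_w(v), v\rangle \;=\; \kappa + \lambda \langle v, u\rangle^2.
\]
Therefore $\sec(v,w) = \kappa$ if and only if $\lambda \langle v, u\rangle^2 = 0$, which by the description of the kernel above is equivalent to $v \in E_w$. There is no substantive obstacle here; the statement is essentially a packaging of the spectral form of the rank-one perturbation.
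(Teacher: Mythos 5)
Your proof is correct and takes essentially the same approach as the paper: both exploit the spectral decomposition of the symmetric rank-at-most-one perturbation $\mathcal{J}_w - \kappa\Id_{w^{\perp}}$ and read off $\sec(v,w) = \kappa + (\text{eigenvalue})\cdot(\text{component along the exceptional eigenvector})^2$. The paper phrases this with an explicit orthonormal eigenbasis, while you package it as $\lambda\langle v,u\rangle^2$, but the content is identical.
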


\begin{proof}
If $v \in E_w$, then $\sec(v,w)=\langle \mathcal{J}_w(v),v \rangle=\langle \kappa v,v\rangle=\kappa.$  Conversely, assume that $\sec(v,w)=\kappa$.  The desired conclusion $v \in E_w$ holds trivially if $E_w=w^{\perp}$, so it may also be assumed that $E_w$ is a proper subspace of $w^{\perp}$. There exists an orthonormal basis $\{e_1, \ldots, e_{d-1}\}$ of $w^{\perp}$ consisting of eigenvectors of $\mathcal{J}_w$ with eigenvalues  $\lambda_i=\kappa$ for each $i\neq d-1$ and $\lambda_{d-1}\neq \kappa$.  Writing $v=\sum_{i=1}^{d-1} \alpha_i e_i$,  $$\kappa=\sec(v,w)=\langle \mathcal{J}_w(v), v \rangle =\kappa(\alpha_1^2+\cdots+\alpha_{d-2}^2)+\lambda_{d-1}\alpha_{d-1}^2=\kappa+(\lambda_{d-1}-\kappa)\alpha_{d-1}^2.$$ Therefore $\alpha_{d-1}=0$, concluding the proof.
\end{proof}

\begin{lemma}\label{l2}
The eigenspace distribution $E$ is totally geodesic. 
\end{lemma}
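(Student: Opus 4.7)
The plan is to reduce the claim to Lemma \ref{l1}, exploiting the fact that geodesics on the unit sphere $S$ are great circles and therefore trace out 2-planes in $V$ whose sectional curvature is a single number.

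First I would recall that the totally geodesic condition, as defined in Section \ref{totgeo}, requires showing that any geodesic $c:\mathbf{R}\to S$ whose initial velocity satisfies $\dot c(0)\in E_{c(0)}$ remains tangent to $E$ for all time. So fix such a geodesic. Writing $s=c(0)$ and $v=\dot c(0)$, the pair $\{s,v\}\subset V$ is orthonormal, and the geodesic has the explicit form $c(t)=\cos(t)s+\sin(t)v$, $\dot c(t)=-\sin(t)s+\cos(t)v$. The key observation is that for every $t$, the 2-plane $\spn(c(t),\dot c(t))$ equals the fixed plane $\spn(s,v)$, and $\{c(t),\dot c(t)\}$ is an orthonormal basis of it.

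Next I would invoke Lemma \ref{l1} twice. Since $v\in E_s$ by assumption, Lemma \ref{l1} gives $\sec(s,v)=\kappa$. But $\sec$ is a function of the 2-plane alone, so $\sec(c(t),\dot c(t))=\sec(s,v)=\kappa$ for every $t$. Applying Lemma \ref{l1} in the reverse direction to the orthonormal pair $\{c(t),\dot c(t)\}$ then yields $\dot c(t)\in E_{c(t)}$, as required.

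There is no serious obstacle here: the argument is entirely formal once Lemma \ref{l1} is in hand, since the content of that lemma is precisely that membership in $E$ is detected by sectional curvature of a 2-plane, and spherical geodesics preserve their ambient 2-plane. The only small thing to record for cleanliness is that the statement of Lemma \ref{l1} requires the pair to be orthonormal, which is automatic here because $c(t)$ and $\dot c(t)$ are unit vectors with $\langle c(t),\dot c(t)\rangle=0$ for all $t$.
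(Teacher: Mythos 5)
Your proof is correct and follows essentially the same route as the paper's: both arguments fix the spherical geodesic $c(t)=\cos(t)s+\sin(t)v$, note that $\spn(c(t),\dot c(t))$ is the constant plane $\spn(s,v)$, and apply Lemma \ref{l1} in both directions to pass from $\dot c(0)\in E_{c(0)}$ to $\dot c(t)\in E_{c(t)}$ for all $t$. Nothing is missing.
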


\begin{proof}
Let $v,w \in S$ with $w\in E_v$. Let $\sigma=\spn(v,w)$.  By Lemma \ref{l1}, $\sec(\sigma)=\kappa$.  If $c(t)=\cos(t)v+\sin(t)w$, then $\sigma=\spn(c(t),\dot{c}(t))$ for all $t \in \mathbf{R}$.  By Lemma \ref{l1}, $\dot{c}(t) \in E_{c(t)}$ for all $t \in \mathbf{R}$.
\end{proof}

By Theorem \ref{extend}, there exists a projective class $[A]$ of skew symmetric linear endomorphisms of $V$ such that $E=D[A]$.  The class $[A]$ is nonzero if and only if $R$ is not an isotropic point.  Let $K=\ker[A]$, $M=K^{\perp}$, $\mathcal{X}=\ker[A] \cap S$ and $S_M=M \cap S$.  Note that $V=K \oplus M$, that $K$ and $M$ are $[A]$-invariant subspaces, and that $S_M$ is odd dimensional.  


\begin{definition}
The \textit{extremal curvature} function is the function $\lambda:S \rightarrow \mathbf{R}$ defined by $\lambda(s)=\tr(\mathcal{J}_s)-(d-2)\kappa$.
\end{definition}

\begin{remark}\label{use}
Note that $\lambda$ is a continuous function on $S$ and that $\lambda=\kappa$ precisely on $\mathcal{X}$. When $R=R_{\kappa,\tau,A}$, Lemma \ref{model} implies that  $$\lambda(s)=\kappa-3\tau\langle s, A^2s\rangle.$$
\end{remark} 

\begin{lemma}\label{jacobi}
For $s \in S$, the Jacobi operator $\mathcal{J}_s:s^{\perp} \rightarrow s^{\perp}$ is given by 
\begin{enumerate}
\item $\mathcal{J}_s(w)=\kappa w$ for $s \in \mathcal{X}$,
\item $\mathcal{J}_s(w)=\kappa w +(\lambda(s)-\kappa)\frac{\langle w, As\rangle}{\langle As, As \rangle}As$ for $s \in S \setminus \mathcal{X}.$
\end{enumerate}
\end{lemma}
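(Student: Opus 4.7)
The plan is to apply Theorem \ref{extend} to the eigenspace distribution $E$. Lemmas \ref{l1} and \ref{l2} have already established that $E$ is a totally geodesic distribution of codimension at most one on $S$, so Theorem \ref{extend} produces a skew symmetric endomorphism $A$ with $E = D[A]$; explicitly, $E_s = \spn(s, As)^{\perp}$ for every $s \in S$. From the construction in the proof of Theorem \ref{extend}, the singular set of $E$ is $\mathcal{X} = \ker A \cap S$, so $s \in \mathcal{X}$ if and only if $As = 0$.

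Case (1) is then immediate: if $s \in \mathcal{X}$, then $As = 0$ forces $E_s = s^{\perp}$, so $\mathcal{J}_s - \kappa \Id_{s^{\perp}}$ vanishes identically and $\mathcal{J}_s(w) = \kappa w$ for every $w \in s^{\perp}$. For case (2), suppose $s \in S \setminus \mathcal{X}$, so $As \neq 0$ and $s^{\perp} = E_s \oplus \spn(As)$ is an orthogonal direct sum. Since $\mathcal{J}_s$ is symmetric and $E_s$ is its $\kappa$-eigenspace, the complementary line $\spn(As)$ is $\mathcal{J}_s$-invariant, so there exists $\mu \in \mathbf{R}$ with $\mathcal{J}_s(As) = \mu As$. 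Reading the trace then yields $\tr(\mathcal{J}_s) = (d-2)\kappa + \mu$, which combined with the definition of $\lambda$ forces $\mu = \lambda(s)$.

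To finish, I would write an arbitrary $w \in s^{\perp}$ as $w = w_E + \alpha As$ with $w_E \in E_s$ and $\alpha = \langle w, As\rangle / \langle As, As \rangle$, and then compute
$$\mathcal{J}_s(w) = \kappa w_E + \alpha \lambda(s) As = \kappa w + \alpha(\lambda(s)-\kappa) As,$$
which is exactly the stated formula. I do not anticipate a genuine obstacle here: once Theorem \ref{extend} identifies the codimension one eigenspace distribution with some $D[A]$, the lemma is a bookkeeping consequence of the rank-at-most-one structure together with the trace identity defining $\lambda$. The only subtlety worth noting is the identification $\mathcal{X} = \ker A \cap S$, which is inherited from the way $A$ is extended to $V$ in the proof of Theorem \ref{extend}.
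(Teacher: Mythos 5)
Your proof is correct and follows the same line as the paper's: decompose $w \in s^{\perp}$ orthogonally into its $E_s$-component and its $\spn(As)$-component, observe that $As$ is an eigenvector (since $\spn(As)$ is the orthogonal complement of the $\kappa$-eigenspace $E_s$ and $\mathcal{J}_s$ is symmetric), and read off the eigenvalue $\lambda(s)$ from the trace. The paper's proof is more terse, simply asserting that $As$ is an eigenvector with eigenvalue $\lambda(s)$; your version supplies the justification but adds nothing structurally different.
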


\begin{proof}
Assertion (1) is obvious.  As for (2), first observe that the expression for $\mathcal{J}_s$ is independent of a choice of a representative $A \in [A]$.  Fix $A \in [A]$ and note that $As$ is an eigenvector of $\mathcal{J}_s$ with eigenvalue $\lambda(s)$.  Given a vector $w \in s^{\perp}$, let $w_1=w-\frac{\langle w, As \rangle}{\langle As, As \rangle}As$ and $w_2=\frac{\langle w, As\rangle}{\langle As, As \rangle}As$.  The vectors $w_1$ and $w_2$ are eigenvectors of $\mathcal{J}_s$ with corresponding eigenvalues $\kappa$ and $\lambda(s)$.  The desired formula follows since $w=w_1+w_2$. 
\end{proof}

\begin{lemma}\label{equal}
If $v,w \in V$ are orthonormal, then $$(\lambda(v)-\kappa)\langle Aw, Aw \rangle=(\lambda(w)-\kappa) \langle Av, Av \rangle.$$
\end{lemma}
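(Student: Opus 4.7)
The plan is to deduce the identity from the pair symmetry of the curvature tensor---namely the equality $\sec(v,w) = \sec(w,v)$ of sectional curvatures---combined with the explicit description of the Jacobi operator provided by Lemma~\ref{jacobi}.

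First I dispose of the degenerate cases. If $v \in \mathcal{X}$, then by Remark~\ref{use} together with the equality $\mathcal{X} = \ker A \cap S$, both $\lambda(v) - \kappa$ and $\langle Av, Av \rangle$ vanish, so both sides of the claimed identity equal zero; the symmetric case $w \in \mathcal{X}$ is analogous. Henceforth I may assume $v, w \notin \mathcal{X}$.

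For orthonormal $v, w \notin \mathcal{X}$, Lemma~\ref{jacobi}(2) yields
\[
\langle \mathcal{J}_w v, v \rangle = \kappa + (\lambda(w) - \kappa) \frac{\langle v, Aw \rangle^2}{\langle Aw, Aw \rangle}, \qquad
\langle \mathcal{J}_v w, w \rangle = \kappa + (\lambda(v) - \kappa) \frac{\langle w, Av \rangle^2}{\langle Av, Av \rangle}.
\]
Pair symmetry of $R$ forces these two quantities to coincide, and since $A$ is skew-symmetric, $\langle v, Aw \rangle^2 = \langle w, Av \rangle^2$. Whenever this common value is nonzero, cross-multiplying delivers the desired identity.

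The main obstacle is the residual case $\langle v, Aw \rangle = 0$, where the pair symmetry collapses to $0 = 0$ and offers no information; I bypass this by perturbing inside the sphere $v^\perp \cap S$. Here $Av$ is orthogonal to both $v$ and $w$, so $w$ and $Av/\sqrt{\langle Av, Av\rangle}$ form an orthonormal pair in $v^\perp$. Setting $w_\theta = \cos(\theta)\,w + \sin(\theta)\cdot Av/\sqrt{\langle Av, Av\rangle}$ defines a curve of unit vectors in $v^\perp$, and a direct computation using $\langle v, A^2 v \rangle = -\langle Av, Av\rangle$ gives $\langle v, Aw_\theta \rangle = -\sin(\theta)\sqrt{\langle Av, Av \rangle}$, which is nonzero for small $\theta \neq 0$; continuity also guarantees $w_\theta \notin \mathcal{X}$ for such $\theta$. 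The previous case therefore applies to each pair $(v, w_\theta)$, and letting $\theta \to 0$, the continuity of $\lambda$ and of the map $s \mapsto \langle As, As \rangle$ transfers the identity to the original pair $(v, w)$.
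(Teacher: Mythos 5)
Your proof is correct and follows the same overall strategy as the paper: both deduce the identity from the pair symmetry $\langle \mathcal{J}_v(w),w\rangle = \sec(v,w) = \langle \mathcal{J}_w(v),v\rangle$ combined with Lemma~\ref{jacobi}(2), and both resolve the degenerate case $\langle v, Aw\rangle = 0$ by continuity. The only real difference is in how that last step is implemented: the paper observes that the function $f(v,w) = (\lambda(v)-\kappa)\langle Aw,Aw\rangle - (\lambda(w)-\kappa)\langle Av,Av\rangle$ is continuous on $X=\{(v,w)\in S\times S : \langle v,w\rangle=0\}$ and vanishes on the dense set of pairs with $\sec(v,w)\neq\kappa$, while you bypass the density claim by constructing the explicit one-parameter family $w_\theta = \cos(\theta)\,w + \sin(\theta)\,Av/\lvert Av\rvert$ inside $v^\perp\cap S$ and letting $\theta\to 0$. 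Your perturbation is verifiably in the non-degenerate regime (the computation $\langle v, Aw_\theta\rangle = -\sin(\theta)\lvert Av\rvert$ is correct), so the argument is sound and in fact slightly more self-contained, since it does not rely on an unproven density assertion.
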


\begin{proof}
Whether or not the equality holds is independent of the choice of representative $A \in [A]$ and moreover, holds trivially if $[A]=0$.  If $[A]\neq 0$ and $\sec(v,w) \neq \kappa$, then the equality follows from Lemma \ref{jacobi}-(2) and the equalities $\langle \mathcal{J}_v(w),w\rangle=\sec(v,w)=\langle \mathcal{J}_w(v),v\rangle$. 

To verify the desired equality holds for all orthonormal pairs, let $$X=\{(v,w) \in S \times S\, \vert\, \langle v, w \rangle =0\}.$$  The function $f:X \rightarrow \mathbf{R}$ defined by $$f((v,w))=(\lambda(v)-\kappa)\langle Aw, Aw \rangle-(\lambda(w)-\kappa) \langle Av, Av \rangle$$ is continuous and, by the previous paragraph, has value zero on the subset of orthonormal pairs spanning planes of sectional curvature other than $\kappa$.  As $[A]\neq 0$, this subset is dense in $X$, whence $f$ vanishes identically. 
\end{proof}

\begin{lemma}\label{lam}
For each $A \in [A]$ there exists a constant $\tau:=\tau(A) \in \mathbf{R}$ such that for each $s \in S$,  
$\lambda(s)=\kappa+ 3\tau\langle As, As\rangle.$
\end{lemma}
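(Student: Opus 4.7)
The plan is to define the candidate ratio $f(s) := (\lambda(s) - \kappa)/(3\langle As, As\rangle)$ on the open set $S \setminus \mathcal{X}$, to prove that $f$ is constant, and then to take $\tau$ equal to this common value. For $s \in \mathcal{X}$ the identity $\lambda(s) = \kappa + 3\tau\langle As, As\rangle$ holds automatically: both sides equal $\kappa$ (using $As = 0$ together with Lemma~\ref{jacobi}(1)). If the representative $A = 0$, then $R$ is isotropic, $\lambda \equiv \kappa$, and any $\tau$ works; so I assume $A \neq 0$, in which case skew symmetry gives $\rnk A \geq 2$ and $\image A = (\ker A)^{\perp}$ has dimension at least $2$.

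The key input is Lemma~\ref{equal}, which already tells me that $f(v) = f(w)$ for every orthogonal pair $v, w \in S \setminus \mathcal{X}$. To upgrade this to $f(v_1) = f(v_2)$ for arbitrary $v_1, v_2 \in S \setminus \mathcal{X}$, I would find a unit vector orthogonal to both of them and apply the lemma twice. The case analysis hinges on the plane $P := \spn(v_1, v_2)$.

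When $P \neq \image A$, I claim $P^{\perp} \not\subset \ker A$. Indeed, $\dim P^{\perp} = d-2$ while $\dim \ker A = d - \rnk A \leq d - 2$, and containment would force $\rnk A = 2$ together with $P^{\perp} = \ker A$, equivalently $P = \image A$, contradicting the hypothesis. Pick a unit vector $w \in P^{\perp} \setminus \ker A$; two applications of Lemma~\ref{equal} give $f(v_1) = f(w) = f(v_2)$.

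The main obstacle is the remaining case $P = \image A$, which forces $\rnk A = 2$. There every unit vector orthogonal to both $v_1$ and $v_2$ sits inside $\ker A$ and is unusable in Lemma~\ref{equal}. I would get around this by selecting an auxiliary unit vector $v_3 \in S$ lying in neither $\ker A$ nor $\image A$; such a $v_3$ exists whenever $d \geq 3$, since a real vector space is never the union of two proper subspaces. Then $v_3 \in S \setminus \mathcal{X}$ and, since $v_3 \notin \image A$, neither $\spn(v_1, v_3)$ nor $\spn(v_2, v_3)$ equals $\image A$; the previous case applies to both pairs and chains to $f(v_1) = f(v_3) = f(v_2)$. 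This completes the constancy of $f$ and therefore yields the constant $\tau := f(s_0)$ demanded by the statement.
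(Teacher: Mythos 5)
Your proof is correct, and it rests on the same key input as the paper's: Lemma~\ref{equal}, which gives $f(v)=f(w)$ for orthogonal pairs $v,w\in S\setminus\mathcal{X}$, followed by a chaining argument. The organization and the treatment of the hard case, however, genuinely differ. The paper decomposes $V=K\oplus M$ with $M=\image A$, first proves constancy of $\phi$ on $S_M$ — chaining through a common orthogonal vector when $\dim M>2$, and in the case $\dim M=2$ invoking the fact that $A^2$ acts on $M$ as a scalar, which makes $\langle As,As\rangle$ constant and identifies $\lambda|_{S_M}$ with the fixed sectional curvature of the plane $M$ — and then extends to $S\setminus\mathcal{X}$ by pairing each $s$ with an orthogonal partner in $S_M$. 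You instead chain directly on $S\setminus\mathcal{X}$, splitting on whether $\spn(v_1,v_2)=\image A$. Your dimension count producing $w\in P^{\perp}\setminus\ker A$ in the generic case is sound, and your handling of the case $P=\image A$ (which, as you note, forces $\rnk A=2$, exactly the paper's $\dim M=2$ case) via an auxiliary $v_3\notin\ker A\cup\image A$ is valid, using that a real vector space over an infinite field is never a union of two proper subspaces. Your version of the hard case is purely linear-algebraic, bypassing the curvature observation the paper relies on; what it costs is the extra auxiliary-vector step and the (implicit, and harmless for the paper's purposes) hypothesis $d\geq 3$. Both routes establish the lemma.
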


\begin{proof}
If $[A]=0$, then the desired equality holds with $\tau=0$.  Now assume that $[A] \neq 0$ and fix $A \in A$.  Define a function $\phi:S \setminus \mathcal{X} \rightarrow \mathbf{R}$ by  $\phi(s)=\frac{\lambda(s)-\kappa}{3\langle As, As \rangle}$.  It suffices to prove that $\phi$ is a constant function.  By Lemma \ref{equal}, $\phi$ has equal value on pairs of orthogonal vectors in $S \setminus \mathcal{X}$.  

We first claim that the restriction of $\phi$ to $S_M$ is constant.  If $\dim(M)=2$, then vectors in $S_M$ are eigenvectors of $A^2$ with a common eigenvalue from which the claim follows.  If $\dim(M)>2$, then  recall that $M$ is even dimensional and $\dim(S_M) \geq 3$.  Therefore, if $m_1, m_2 \in S_M$, there exists $m \in S_M$ orthogonal to both of $m_1$ and $m_2$.  Therefore, $\phi(m_1)=\phi(m)=\phi(m_2)$, concluding the proof of this claim.  Let $\tau:=\phi(S_M)$.

Now if, $s \in S \setminus \mathcal{X}$, there exists $\bar{s} \in s^{\perp} \cap S_M$.  Therefore $\phi(s)=\phi(\bar{s})=\tau$, concluding the proof.



\end{proof}

\noindent \textit{Proof of Theorem \ref{algebraic}.}  
Let $R$ be a $\kappa$-almost isotropic curvature tensor on $(V, \langle \cdot, \cdot \rangle)$.  In the notation of Lemma \ref{lam}, there exists $A \in [A]$ such that $\tau=\tau(A) \in \{-1,0,1\}$.  The algebraic curvature tensors $R$ and $R_{\kappa,\tau,A}$ have equal Jacobi operators by Lemmas \ref{model}, \ref{jacobi}, and \ref{lam}.  Therefore, these algebraic curvature tensors have equal sectional curvatures.  The Theorem follows since an algebraic curvature tensor is determined by its sectional curvatures.\qed\\

If $R$ is an algebraic curvature tensor on $(V,\langle \cdot, \cdot \rangle)$, then the associated Ricci tensor is the symmetric tensor $\Ric:V \times V \rightarrow \mathbf{R}$ defined by $\Ric(v,w)=\tr(x \mapsto R(x,v)w)$.  The algebraic curvature tensor $R$ is \textit{Einstein} if there exists a constant $c\in \mathbf{R}$ such that $\Ric(\cdot,\cdot)=c\,\langle \cdot , \cdot \rangle$.   

\begin{lemma}\label{einstein}
Let $R=R_{\kappa, \tau, A}$ be an almost isotropic curvature tensor.  Then for each $v \in S$, $$ \Ric(v,v)=(d-1)\kappa -3\tau\langle v,A^2v\rangle.$$  In particular, $R$ is Einstein if and only if $A^2$ is a multiple of the identity.
\end{lemma}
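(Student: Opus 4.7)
The plan is to compute $\Ric(v,v)$ directly as the trace of the Jacobi operator and then apply the explicit formula for $\mathcal{J}_v$ provided by Lemma \ref{model}.

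First, I would observe that for any unit vector $v \in V$, the linear map $L_v : x \mapsto R(x,v)v$ satisfies $L_v(v) = R(v,v)v = 0$ by skew-symmetry of the curvature in its first pair of arguments, while for $x \in v^{\perp}$ we have $L_v(x) = \mathcal{J}_v(x)$. Taking the trace in an orthonormal basis adapted to the splitting $V = \spn(v) \oplus v^{\perp}$ gives $\Ric(v,v) = \tr(L_v) = \tr(\mathcal{J}_v)$.

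Next I would use Lemma \ref{model}: for $w \in v^{\perp}$, $\mathcal{J}_v(w) = \kappa w + 3\tau\langle w, Av\rangle Av$. Skew-symmetry of $A$ gives $\langle v, Av\rangle = 0$, so $Av \in v^{\perp}$ and the rank-one summand $w \mapsto 3\tau\langle w, Av\rangle Av$ is a genuine endomorphism of $v^{\perp}$. Its trace is $3\tau\langle Av, Av\rangle$, while the first summand contributes $(d-1)\kappa$. Using skew-symmetry again, $\langle Av, Av\rangle = -\langle v, A^2 v\rangle$, so
\[
\Ric(v,v) = (d-1)\kappa - 3\tau\langle v, A^2 v\rangle,
\]
which is the claimed formula.

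For the Einstein characterization, $R$ is Einstein precisely when $v \mapsto \Ric(v,v)$ is a constant multiple of $\|v\|^2$ on $V$, i.e.\ when $v \mapsto \langle v, A^2 v\rangle$ is constant on the unit sphere (or $\tau = 0$, in which case $A = 0$ by Convention \ref{iso} and the claim is trivial). Since $A$ is skew-symmetric, $A^2$ is symmetric, so by the spectral theorem (equivalently, polarization of the quadratic form) constancy of $\langle v, A^2 v\rangle$ on the unit sphere is equivalent to $A^2$ being a scalar multiple of the identity. No step poses a real obstacle; the only subtlety is the bookkeeping that $Av$ lies in $v^{\perp}$, which is why the rank-one trace computation on $v^{\perp}$ is identical to the trace on $V$.
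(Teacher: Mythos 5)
Your proof is correct and follows essentially the same route as the paper's: both compute $\Ric(v,v)=\tr\mathcal{J}_v$ and then use the explicit formula for $\mathcal{J}_v$ from Lemma \ref{model} to evaluate the trace. The only cosmetic difference is that the paper routes the bookkeeping through the extremal curvature function $\lambda$ (via Remark \ref{use}), whereas you trace the rank-one term directly; your note that $Av \in v^{\perp}$ (so the rank-one summand is genuinely an endomorphism of $v^{\perp}$ with trace $\langle Av,Av\rangle$) is exactly the point that makes the computation go through.
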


\begin{proof}
The first assertion implies the second assertion. If $v \in S$, then $\Ric(v,v)=\tr\mathcal{J}_v=\lambda(v)+(d-2)\kappa$.  By Lemma \ref{lam}, $\Ric(v,v)=(d-1)\kappa -3\tau\langle v,A^2v\rangle$.  
\end{proof}

\section{K\"ahler Almost isotropic algebraic curvature tensors.}  \label{Kahler}

K\"ahler almost isotropic curvature tensors are classified in this section.  Let $R$ denote a K\"ahler $\kappa$-almost isotropic curvature tensor on a complex model $(V,\langle, \cdot, \cdot \rangle)$.  By Theorem \ref{algebraic} there exists $\tau \in \{-1,0,1\}$ and a skew symmetric endomorphism $A:V \rightarrow V$ such that 
\begin{equation}\label{formula}
R(x,y)z=\kappa[\langle y,z\rangle x -\langle x,z\rangle y]+\tau[2\langle x, Ay \rangle Az + \langle x, Az\rangle Ay -\langle y,Az\rangle Ax] \end{equation} for each $x,y,z \in V$. Moreover, $\tau=0$ if and only if $A=0$. 

In the remainder of this section we let $B=AJ$.  Note that the adjoint of $B$ equals $JA$.  Recall that $J$ is an orthogonal almost complex structure for our complex model, $(V,\langle, \cdot, \cdot \rangle)$.

\begin{lemma}\label{calculate}
If $\{x,y\}$ are an orthonormal pair of vectors in $V$, then \begin{eqnarray}\label{one} \kappa[\langle x,Jy \rangle Jy-x]=\\ \nonumber\tau[\langle x, (3A+2JB)y \rangle Ay + \langle x, By \rangle By-\langle y, By \rangle Bx] \end{eqnarray} and \begin{eqnarray}\label{two}  \kappa[\langle y, By \rangle Jx-\langle y, Bx \rangle Jy-\langle x, Ay \rangle y]=\\ \nonumber \tau[2\langle x,(A+JB)y\rangle A^2y+\langle y, A^2y \rangle Ax-\langle x, A^2y\rangle Ay+\langle By,Ay\rangle Bx-\langle Bx,Ay\rangle By].\end{eqnarray}
\end{lemma}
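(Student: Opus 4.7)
The plan is to deduce both identities from the K\"ahler symmetry $R(x,y)z = R(Jx,Jy)z$ specialized to test vectors $z$ matching the structure of the targeted identity. Throughout we use the formula $R = \kappa R_1 + \tau R_A$ from Theorem \ref{algebraic}, the skew-symmetries $A^* = -A$ and $J^* = -J$, and the identifications $AJx = Bx$, $AJy = By$. The orthonormality $\langle x,y\rangle = 0$, $\|x\| = \|y\| = 1$, together with $\langle y, Ay\rangle = 0$ and $\langle Jy, y\rangle = 0$, cause many raw terms to vanish.

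For (\ref{one}), take $z = y$. The left-hand side $R(x,y)y$ collapses to $\kappa x + 3\tau\langle x, Ay\rangle Ay$. On the right, the $\kappa$ part of $R(Jx,Jy)y$ reduces to $\kappa\langle x, Jy\rangle Jy$ while the $\tau$ part expands to $\tau[2\langle Jx, By\rangle Ay + \langle Jx, Ay\rangle By - \langle Jy, Ay\rangle Bx]$. Equating the two sides, moving the $\kappa$-terms to the left, and converting the mixed inner products via the skew adjoint identities (so that $\langle Jx, By\rangle = -\langle x, JBy\rangle$, $\langle Jy, Ay\rangle = -\langle y, By\rangle$, and similarly for the remaining coefficient) assembles the three $\tau$-coefficients into those displayed on the right-hand side of (\ref{one}); in particular the coefficient of $Ay$ becomes $\langle x,(3A+2JB)y\rangle$.

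For (\ref{two}), take $z = Ay$. The left-hand side $R(x,y)Ay$ becomes $-\kappa\langle x, Ay\rangle y + \tau[2\langle x, Ay\rangle A^2 y + \langle x, A^2 y\rangle Ay - \langle y, A^2 y\rangle Ax]$, while $R(Jx,Jy)Ay$ contributes $\kappa[\langle Jy, Ay\rangle Jx - \langle Jx, Ay\rangle Jy] + \tau[2\langle Jx, By\rangle A^2 y + \langle Jx, A^2 y\rangle By - \langle Jy, A^2 y\rangle Bx]$. The substitutions $\langle Jy, Ay\rangle = -\langle y, By\rangle$, $\langle Jx, Ay\rangle = -\langle y, Bx\rangle$, $\langle Jy, A^2 y\rangle = -\langle By, Ay\rangle$, and $\langle Jx, A^2 y\rangle = -\langle Bx, Ay\rangle$ (all via $A^* = -A$), together with the identity $\langle Jx, By\rangle = -\langle x, JBy\rangle$ used to build $\langle x, (A+JB)y\rangle$, recast every coefficient into the form required by (\ref{two}).

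The main obstacle is bookkeeping rather than insight: each identity distributes across roughly a dozen inner-product coefficients, and one must apply the correct skew-adjoint rule uniformly so that the answer rearranges into the compact form given. No additional structural hypothesis on $A$ is invoked; the K\"ahler symmetry is spent only on the two specific choices $z = y$ and $z = Ay$, with other choices of $z$ remaining available for the subsequent structural analysis in this section.
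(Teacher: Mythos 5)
Your proof takes exactly the same route as the paper: expand the K\"ahler symmetry $R(x,y)z = R(Jx,Jy)z$ at the two test vectors $z = y$ (for (\ref{one})) and $z = Ay$ (for (\ref{two})) using the presentation $R = \kappa R_1 + \tau R_A$, and then simplify with the skew-adjoint relations for $A$ and $J$. The paper's own proof is precisely this two-sentence prescription, so the approaches coincide; you simply supply the intermediate bookkeeping. One small caveat: if you carry the simplification all the way through, the $A^2y$-coefficient comes out as $-2\langle x,(A+JB)y\rangle$ rather than $+2\langle x,(A+JB)y\rangle$, and the $By$-coefficient in (\ref{one}) comes out as $\langle y,Bx\rangle$ rather than $\langle x,By\rangle$; both discrepancies appear to be harmless typographical slips in the displayed lemma (they vanish once $\tau\neq 0$ forces $B$ symmetric and $A+JB=0$ via Lemma \ref{commute}), so your assertion that ``every coefficient recasts into the form required'' should be read with that grain of salt, but it does not affect the soundness of the method.
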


\begin{proof}
To derive (\ref{one}), use (\ref{formula}) to expand the symmetry $R(x,y)y=R(Jx,Jy)y$ and then simplify. To derive (\ref{two}) use (\ref{formula}) to expand the symmetry $R(x,y)Ay=R(Jx,Jy)Ay$ and then simplify.
\end{proof}

\begin{lemma}\label{zero}
If $\dim(V) \geq 4$ and if $\tau=0$, then $\kappa=0$ and $R=0$.
\end{lemma}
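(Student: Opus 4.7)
The plan is to exploit equation (\ref{one}) of Lemma \ref{calculate}, which is a direct consequence of the K\"ahler symmetry. Setting $\tau = 0$ collapses the right-hand side, so that for every orthonormal pair $\{x, y\} \subset V$ one has
\[
\kappa\bigl[\langle x, Jy\rangle\, Jy - x\bigr] = 0.
\]
My strategy is to specialize this identity to a pair $\{x, y\}$ chosen so that the first term drops out, thereby forcing $\kappa x = 0$ and hence $\kappa = 0$.

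The selection uses the dimension hypothesis: because $\dim(V) \geq 4$, for any unit vector $x$ the real $2$-plane $\spn(x, Jx)$ has a complement of real dimension at least two, so it contains a unit vector $y$. For such a $y$ we have $\langle y, x\rangle = \langle y, Jx\rangle = 0$. Since $J$ is orthogonal with $J^2 = -\Id$, it is skew-adjoint, giving $\langle x, Jy\rangle = -\langle Jx, y\rangle = 0$. Substituting into the displayed identity yields $-\kappa x = 0$, so $\kappa = 0$.

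Once $\kappa = 0$ is established, combining this with the assumption $\tau = 0$ and the explicit formula (\ref{formula}) for the curvature tensor gives $R \equiv 0$, completing the proof.

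I do not anticipate any real obstacle: the whole argument rests on being able to pick $y$ orthogonal to the holomorphic plane through $x$, which is available precisely because of the hypothesis $\dim(V) \geq 4$. (In dimension two, by contrast, $\{x, Jx\}$ already spans $V$ and no such $y$ exists, which is consistent with part (1) of Theorem \ref{kahleralgebraic}, where $R = \kappa R_1$ is allowed.)
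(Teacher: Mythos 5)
Your argument is exactly the paper's: substitute $\tau=0$ into equation (\ref{one}), pick a unit $y\in\spn(x,Jx)^{\perp}$ (possible since $\dim(V)\geq 4$) so that $\langle x,Jy\rangle=0$, and deduce $\kappa x=0$, hence $\kappa=0$ and then $R=0$ from (\ref{formula}). The proof is correct and matches the paper's approach.
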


\begin{proof}
Fix a unit vector $x \in V$.  As $\dim(V) \geq 4$, there exists a unit vector $y \in \spn(x,Jx)^{\perp}$.  Use (\ref{one}) to conclude $\kappa=0$.
\end{proof}




\begin{lemma}\label{commute?}
If $\tau \neq 0$, then $AJ=JA$ or $AJ=-JA$.
\end{lemma}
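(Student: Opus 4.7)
The conclusion is equivalent to $B := AJ$ being symmetric or skew-symmetric, since $B^\ast = (AJ)^\ast = J^\ast A^\ast = JA$. Equivalently, setting $q(y) := \langle y, AJy\rangle$, either $q \equiv 0$ (giving $AJ = -JA$) or $A$ equals its $J$-commuting piece $A_+ := \tfrac12(A - JAJ)$ (giving $AJ = JA$). In dimension two every skew-symmetric operator on $\R^2$ is a real multiple of $J$, so the conclusion $AJ = JA$ holds trivially; I therefore assume $\dim V \geq 4$.

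My plan is to read identity~(\ref{one}) as a linear equation in $x$ restricted to the codimension-two subspace $W_y := \spn(y,Jy)^\perp$, where the left-hand side collapses to $-\kappa x$ and the identity becomes
\[
-\kappa x + \tau q(y)\, Bx \;=\; \tau\langle x,(3A+2JB)y\rangle\, Ay + \tau\langle x, By\rangle\, By.
\]
The right-hand side lies in the plane $\spn(Ay, By)$, so the operator $-\kappa I + \tau q(y)B\colon W_y \to V$ has image of dimension at most two, hence a kernel of dimension at least $\dim V - 4$. When $q(y) \neq 0$ this kernel lies in the eigenspace of $B$ for the scalar $\lambda(y) := \kappa/(\tau q(y))$. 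For $\dim V \geq 5$, dimension counting on the eigenspaces of the fixed operator $B$ forces $\lambda$ to take only finitely many values on the open set $\{q \neq 0\}$; since $\lambda$ is continuous there and $q$ is continuous on the connected unit sphere, a limiting argument across $\{q = 0\}$ forces $q$ to be a global constant. Hence $AJ + JA = 2q\,I$, pinning down $A_+ = -qJ$.

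To finish, I would substitute $A = -qJ + A_-$ (with $A_-J = -JA_-$) into identity~(\ref{two}), using the reduction $A + JB = 2A_-$ under this decomposition to obtain a polynomial identity involving only $A_-$, $J$, and the scalars $\langle y, A_-^2y\rangle$. The objective is to conclude either $q = 0$ (so $A_+ = 0$ and $AJ = -JA$) or $A_- = 0$ (so $A = A_+$ and $AJ = JA$). The main obstacle is the four-dimensional case, where the rank-counting argument above gives only the vacuous bound $\dim V - 4 = 0$; here I expect to invoke identities (\ref{one}) and (\ref{two}) simultaneously, with $y$ ranging over an orthonormal frame and with explicit use of the coupling between $A^2$ and $J$ through (\ref{two}) that is absent from (\ref{one}).
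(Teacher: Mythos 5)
Your argument is a plan rather than a proof, and as written it has genuine gaps. The constancy-of-$q$ step breaks down when $\kappa=0$: in that case $\lambda(y)=\kappa/(\tau q(y))\equiv 0$ on $\{q\neq 0\}$, so the finiteness of eigenvalues of $B$ places no constraint on $q$ and your limiting argument across $\{q=0\}$ gives nothing. You also acknowledge (correctly) that the rank bound $\dim V - 4$ is vacuous when $\dim V=4$, and you leave the final step (showing $A_-=0$ when $q\neq 0$) as an unverified objective to be extracted from (\ref{two}) by a substitution you haven't carried out. So at least three branches of the argument are open: $\kappa=0$, $\dim V = 4$, and the $q\neq 0$ case even for $\dim V\geq 6$, $\kappa\neq 0$.

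Beyond the gaps, your route is considerably heavier than necessary, and the paper's proof sidesteps all of these branch issues. Rather than working with the vector identity (\ref{one}) and doing rank counting on $W_y$, one can compare sectional curvatures directly: by Lemma \ref{model}, for orthonormal $x,y$ one has $\sec(x,y)=\kappa + 3\tau\langle x,Ay\rangle^2$, while the K\"ahler symmetry gives $\sec(x,y)=\sec(Jx,Jy)$, so
\[
\langle x, Ay\rangle^2 = \langle Jx, AJy\rangle^2 = \langle x, JAJy\rangle^2,
\]
equivalently $\langle x,(A-JAJ)y\rangle\,\langle x,(A+JAJ)y\rangle = 0$ for every orthogonal pair $x,y$. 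Since both $A-JAJ$ and $A+JAJ$ are skew-symmetric (hence $(A\pm JAJ)y\perp y$), fixing $y$ and letting $x$ range over $y^\perp$ shows that for each $y$ either $(A-JAJ)y=0$ or $(A+JAJ)y=0$; a vector space is never the union of two proper subspaces, so one of $\ker(A-JAJ)$, $\ker(A+JAJ)$ is all of $V$, which is exactly $AJ=-JA$ or $AJ=JA$. This scalar-identity-plus-union-of-subspaces argument is uniform in $\dim V$ and in $\kappa$, and requires neither the rank count nor the constancy of $q$. I would recommend abandoning the approach via (\ref{one}) and (\ref{two}) for this particular lemma.
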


\begin{proof}
Let $x,y \in V$ be an orthonormal pair of vectors.  Use Lemma \ref{model} or (\ref{formula}) to calculate 
$$\kappa+3\tau\langle x, Ay \rangle^2=\langle R(x,y)y,x\rangle = \langle R(Jx, Jy)Jy, Jx  \rangle=\kappa+3 \tau \langle Jx, AJy \rangle^2.$$  Therefore, $$\langle x, Ay \rangle^2= \langle x, JAJy \rangle ^2.$$  Equivalently, $$\langle x, (A+JAJ)y\rangle \langle x, (A-JAJ)y\rangle=0.$$ The last equality remains true if $y$ is replaced with a scalar multiple of $y$.  Since $A+JAJ$ and $A-JAJ$ are skew symmetric, $$V= \ker(JAJ-A) \cup \ker(JAJ+A),$$ implying the Lemma.
\end{proof}

\begin{lemma}\label{skewcommute}
If $\tau \neq 0$ and if $AJ=-JA$, then for each $x \in V$, $$\kappa \langle Ax, Ax \rangle=\tau\langle Ax, Ax \rangle^2.$$ In particular, $\kappa \neq 0$.  \end{lemma}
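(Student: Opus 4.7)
The plan is to apply the K\"ahler symmetry $R(x,y)y = R(Jx, Jy)y$ (which underlies identity (\ref{one})) to a cleverly chosen orthonormal pair. The claim is trivial when $Ax = 0$, so fix a unit vector $x$ with $Ax \neq 0$ and put $y := Bx/|Bx|$, where $B = AJ$.

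Under the hypothesis $AJ = -JA$, I record the structural facts: (i) $B$ is skew-symmetric, since $B^T = JA = -AJ = -B$; (ii) $JB = JAJ = A$ and $B^2 = (AJ)^2 = -A^2J^2 = A^2$; (iii) $A^2$ commutes with $J$, so $JA^2$ is skew. In particular $|Bx| = |Ax|$ and $\langle x, Bx\rangle = 0$, so $\{x, y\}$ is orthonormal; moreover $Jy = Ax/|Ax|$, $By = A^2 x/|Ax|$, and $Ay = A^2 Jx/|Ax| = JA^2 x/|Ax|$.

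The main computation is to expand both sides of the K\"ahler symmetry from (\ref{formula}). Using (iii), the inner products $\langle x, Ay\rangle$, $\langle x, Jy\rangle$, $\langle Jx, y\rangle$, $\langle Jx, By\rangle$, and $\langle Jy, Ay\rangle$ all vanish (as does the automatic $\langle y, Ay\rangle$), while $J$-orthogonality together with $JA^2 = A^2 J$ gives $\langle Jx, Ay\rangle = \langle Jx, JA^2 x\rangle/|Ax| = \langle x, A^2 x\rangle/|Ax| = -|Ax|$. The two sides collapse to $\kappa x$ and $\tau\langle Jx, Ay\rangle By = -\tau A^2 x$ respectively, so $A^2 x = -(\kappa/\tau)x$ and $\langle Ax, Ax\rangle = -\langle x, A^2 x\rangle = \kappa/\tau$; the identity $\kappa\langle Ax, Ax\rangle = \tau\langle Ax, Ax\rangle^2$ follows at once.

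For the supplementary clause $\kappa \neq 0$: Convention \ref{iso} forces $A \neq 0$, so some $x$ satisfies $Ax \neq 0$, and the identity just derived then gives $\kappa = \tau\langle Ax, Ax\rangle \neq 0$. The main bookkeeping challenge is verifying the five vanishing inner products listed above, each of which reduces immediately to one of (i)--(iii); no deeper obstacle arises.
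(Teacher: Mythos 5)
Your proof is correct and takes essentially the same approach as the paper's: both arguments apply the K\"ahler symmetry to the orthonormal pair $\{x, \pm JAx/|Ax|\}$ (your $y = Bx/|Bx|$ is $-JAx/|Ax|$ since $AJ = -JA$), and both extract the scalar identity by pairing with the appropriate vector. The paper phrases the computation via the eigenspace distribution $E_x$ and a sectional curvature count, whereas you expand $R(x,y)y = R(Jx,Jy)y$ directly from formula (\ref{formula}); the bookkeeping is the same and your verifications of the vanishing inner products all check out.
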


\begin{proof}
The first assertion implies the second since $A \neq 0$.  As $AJ=-JA$, the endomorphism $JA$ is skew symmetric. Therefore, $\langle JAx, x \rangle=0$ and $\langle JAx,Ax \rangle=0$.  Hence, if $x \in S$, then $JAx \in E_x=\spn(x,Ax)^{\perp}$ and $$\kappa \langle JAx, JAx \rangle=\langle R(JAx, x)x, JAx\rangle.$$  As $J$ is orthogonal and $R$ satisfies $R(\cdot,\cdot)\cdot=R(J \cdot, J \cdot)\cdot$, $$\kappa \langle Ax, Ax \rangle=-\langle R(Ax, Jx)x, JAx\rangle.$$  Using (\ref{formula}), the last equality simplifies to $$\kappa \langle Ax, Ax \rangle=\tau\langle Ax, Ax \rangle^2.$$  
\end{proof}

\begin{lemma}\label{commute}
If $\tau \neq 0$, then $AJ=JA$.  In particular, $B=AJ$ is a symmetric endomorphism.
\end{lemma}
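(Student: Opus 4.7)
The plan is to rule out the alternative $AJ=-JA$ from Lemma~\ref{commute?} by deriving a contradiction with Lemma~\ref{skewcommute}; once $AJ=JA$ is established, the symmetry of $B=AJ$ is immediate, since its adjoint equals $J^{\top}A^{\top}=(-J)(-A)=JA=AJ=B$. The contradiction will come from substituting a carefully chosen pair of vectors into the K\"ahler identity~(\ref{one}) of Lemma~\ref{calculate}.

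Assuming $AJ=-JA$, Lemma~\ref{skewcommute} yields $\kappa\neq 0$ together with $\kappa\langle Ax, Ax\rangle=\tau\langle Ax, Ax\rangle^2$ for every unit $x$. Since $A\neq 0$, this forces $\alpha:=\kappa/\tau>0$ and $\langle Ax, Ax\rangle\in\{0,\alpha\}$ for $x\in S$; hence $A^2=-\alpha\,\Id_M$ on $M:=(\ker A)^{\perp}$, and $\ker A$ (hence $M$) is $J$-invariant, because $A(Jx)=-J(Ax)$ vanishes whenever $Ax$ does. Multiplying $AJ=-JA$ on the left by $J$ further yields $JAJ=A$, so the coefficient $3A+2JB$ appearing in~(\ref{one}) collapses to $5A$. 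I would then pick any unit vector $x\in M$ and set $y:=Ax/\sqrt{\alpha}$, a unit vector orthogonal to $x$. Using only the skew-symmetry of $A$ and $J$, the identity $AJ=-JA$, and $A^2=-\alpha\,\Id_M$, one checks $\langle Ax, Jx\rangle=0$ and deduces $\langle x, Jy\rangle=\langle x, By\rangle=\langle y, By\rangle=0$, together with $Ay=-\sqrt{\alpha}\,x$. Substituting this data into~(\ref{one}), the left-hand side collapses to $-\kappa x$, while the right-hand side reduces to its first term $5\tau\langle x, Ay\rangle Ay=5\tau\alpha\, x=5\kappa x$. Comparing coefficients of $x$ gives $\kappa=0$, contradicting Lemma~\ref{skewcommute}. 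This rules out $AJ=-JA$ and completes the proof.

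The main obstacle I anticipate is selecting test vectors for which~(\ref{one}) simplifies to a usable relation under the hypothesis $AJ=-JA$. The choice $y\parallel Ax$ is guided by the observation that $A^2$ acts as a scalar on $M$, so $Ay$ returns to the line $\mathbf{R}x$ and the right-hand side becomes a scalar multiple of $x$; the anticommutation then forces the inner products involving $B=AJ$ to vanish, leaving a one-term identity whose coefficients on the two sides cannot match.
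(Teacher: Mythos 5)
Your proposal is correct, and it takes a genuinely different route from the paper. Both arguments start from the dichotomy in Lemma~\ref{commute?}, assume $AJ=-JA$, and feed Lemma~\ref{skewcommute} through to obtain $\kappa\neq 0$ and $A^2=-(\kappa/\tau)\Id$ on $M=(\ker A)^{\perp}$. After that, the approaches diverge. The paper builds a $4$-dimensional $J$-invariant subspace $W=\sigma\oplus J\sigma$, observes that the sectional curvatures on $W$ are pinched between $\kappa$ and $4\kappa$, and then invokes Berger's mixed curvature inequality \cite{ber2,ka} to bound $|R(e_1,e_2,e_3,e_4)|\leq 2|\kappa|$, contradicting the K\"ahler identity $R(e_1,e_2,Je_1,Je_2)=-4\kappa$. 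You instead go back to the K\"ahler identity~(\ref{one}) of Lemma~\ref{calculate}: under $AJ=-JA$ one has $JB=JAJ=A$, so the coefficient $3A+2JB$ collapses to $5A$, and choosing $x\in M\cap S$, $y=Ax/\sqrt{\alpha}$ (with $\alpha=\kappa/\tau$) makes every inner product on the right-hand side vanish except the first; the identity reduces to $-\kappa x=5\kappa x$, forcing $\kappa=0$ and contradicting Lemma~\ref{skewcommute}. I verified each of the intermediate computations ($\langle Ax,Jx\rangle=0$, $\langle x,Jy\rangle=\langle x,By\rangle=\langle y,By\rangle=0$, $Ay=-\sqrt{\alpha}x$, $By=\sqrt{\alpha}Jx$), and they hold. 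Your argument is purely algebraic and self-contained, avoiding the import of Berger's pinching estimate; the paper's argument is shorter to state but relies on that deeper external input and on a bit more geometric scaffolding. One small imprecision worth tightening: the assertion ``$\langle Ax,Ax\rangle\in\{0,\alpha\}$ for $x\in S$'' is stated more broadly than you need and is, in fact, a somewhat delicate claim on its own (it actually ends up forcing $\ker A=\{0\}$); what the argument actually requires is only that $\langle Ax,Ax\rangle=\alpha$ for unit $x\in M$, which follows immediately from Lemma~\ref{skewcommute}, and then $A^2|_M=-\alpha\Id_M$ follows since $A^2+\alpha\Id$ is symmetric, preserves $M$, and has vanishing quadratic form there. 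Restricting the statement to $M\cap S$ would make the step cleaner without changing anything downstream.
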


\begin{proof}
The first assertion implies the second.  As for the first, we now argue by contradiction.  By Lemma \ref{commute?}, $AJ=-JA$. In particular, $JA$ is skew symmetric.

As $A$ is skew symmetric and nonzero, there is an $A$-invariant two dimensional subspace $\sigma$ on which $A$ is nonzero.  If $u \in \sigma$, then $$\langle Ju,Au \rangle=-\langle u, JAu\rangle=0.$$ Therefore $J(u)$ is orthogonal to $\sigma=\spn(u,Au)$.  Hence $J(\sigma)$ is a two dimensional subspace perpendicular to $\sigma$ on which $A$ is also nonzero.  Let $W=\sigma \oplus J(\sigma)$.  By Lemma \ref{skewcommute}, the restriction of $A^2$ to $W$ equals $-\kappa/\tau \Id_W$.  By Remark \ref{use}, the sectional curvatures of $R$ on the subspace $W$ lie between $\kappa$ and $\lambda:=4\kappa$.

Let $\{e_1,e_2\}$ be an orthonormal basis of $\sigma$.  The above discussion implies that $\{e_1,e_2, e_3:=Je_1, e_4:=Je_2\}$ is an orthonormal $4$-frame in $W$.  By Berger's mixed curvature inequality \cite{ber2,ka},  $$| R(e_1,e_2,e_3,e_4) | \leq | \frac{2}{3}(\lambda-\kappa) |=2|\kappa |.$$ This contradicts $$R(e_1,e_2,e_3,e_4)=R(e_1,e_2,Je_1,Je_2)=R(e_1,e_2,e_1,e_2)=-\sec(\sigma)=-\lambda=-4\kappa$$ and $\kappa \neq0$.
\end{proof}




\begin{lemma}\label{relations}
If $\tau\neq 0$, and if $\{e_1,e_2\}$ is an orthonormal pair of eigenvectors of $B$ with corresponding eigenvalues $\mu_1$ and $\mu_2$, then \begin{equation}\label{three} \kappa(1-\langle e_1,Je_2\rangle^2)=\tau(\mu_1 \mu_2-\mu_2^2\langle e_1,Je_2\rangle^2),\end{equation} and  \begin{equation}\label{four} \kappa\mu_2(1-\langle e_1, Je_2\rangle^2)=\tau \mu_1 \mu_2^2(1-\langle e_1, Je_2\rangle^2).\end{equation}

\end{lemma}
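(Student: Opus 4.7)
The plan is to obtain both identities by substituting $(x,y)=(e_1,e_2)$ into the two vector identities of Lemma \ref{calculate}, simplifying every term using the commutation relation $AJ=JA$ of Lemma \ref{commute}, and finally extracting scalars by pairing with a judiciously chosen test vector. The decisive computational input is that the eigenvalue hypothesis $Be_i=\mu_i e_i$, together with $B=AJ=JA$ and $J^2=-\Id$, forces
\[
Ae_i=-JBe_i=-\mu_i Je_i,\qquad AJe_i=JAe_i=\mu_i e_i,\qquad A^2e_i=-\mu_i^2 e_i.
\]
Writing $c:=\langle e_1,Je_2\rangle$, the skew-orthogonality of $J$ further gives $\langle e_i,Je_i\rangle=0$ and $\langle e_2,Je_1\rangle=-c$. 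These identities reduce every inner product appearing in (\ref{one}) and (\ref{two}) to a polynomial in $\mu_1,\mu_2,c$.

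For (\ref{three}), substitute into (\ref{one}). Since $\langle e_1,Be_2\rangle=\mu_2\langle e_1,e_2\rangle=0$, the middle term on the right vanishes; because $(3A+2JB)e_2=-3\mu_2 Je_2+2\mu_2 Je_2=-\mu_2 Je_2$, the first coefficient collapses to $-\mu_2 c$. The identity thus becomes
\[
\kappa\bigl[c\, Je_2-e_1\bigr]=\tau\bigl[\mu_2^2 c\, Je_2-\mu_1\mu_2\, e_1\bigr].
\]
Taking the inner product with $e_1$ and using $\langle Je_2,e_1\rangle=c$ produces $\kappa(c^2-1)=\tau(\mu_2^2 c^2-\mu_1\mu_2)$, which is exactly (\ref{three}).

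For (\ref{four}), substitute into (\ref{two}). The key cancellation is $(A+JB)e_2=-\mu_2 Je_2+\mu_2 Je_2=0$, killing the first summand on the right. The computations $\langle e_2,Be_1\rangle=0$, $\langle e_1,A^2e_2\rangle=0$, and $\langle Be_2,Ae_2\rangle=0$ eliminate several more terms; the remaining ones evaluate to $\langle e_1,Ae_2\rangle=-\mu_2 c$, $\langle e_2,A^2e_2\rangle=-\mu_2^2$, $\langle Be_1,Ae_2\rangle=-\mu_1\mu_2 c$. After substitution one obtains
\[
\kappa\mu_2\bigl[Je_1+c\, e_2\bigr]=\tau\mu_1\mu_2^2\bigl[Je_1+c\, e_2\bigr].
\]
Pairing with $Je_1$ and invoking $\langle e_2,Je_1\rangle=-c$ yields $\kappa\mu_2(1-c^2)=\tau\mu_1\mu_2^2(1-c^2)$, which is (\ref{four}).

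No genuine obstacle is anticipated, as the argument is essentially pure bookkeeping; the mild care required is in verifying the cancellations (e.g.\ $(A+JB)e_2=0$ and $\langle Be_2,Ae_2\rangle=0$) that let the right-hand sides of (\ref{one}) and (\ref{two}) collapse to scalar multiples of the two-dimensional vectors on the left, after which choosing $e_1$ or $Je_1$ as the test vector is the natural move.
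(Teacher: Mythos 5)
Your proof is correct and follows the same strategy as the paper: substitute $(x,y)=(e_1,e_2)$ into (\ref{one}) and pair with $e_1$, then substitute into (\ref{two}) and pair with $Je_1$, using $Ae_i=-\mu_iJe_i$ (from $B=AJ=JA$). The paper states these steps without writing out the cancellations, which you have verified explicitly.
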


\begin{proof}
As $\tau \neq 0$, Lemma \ref{commute} shows that $B=AJ=JA$ is symmetric.  In particular, $JB=-A$ and  $$ Ae_1=-\mu_1Je_1\,\,\,\,\,\,\,\,\,\, Ae_2=-\mu_2 Je_2.$$ Substitute $x=e_1$ and $y=e_2$ in (\ref{one}) and take the inner product of the resulting vector equation with $e_1$ to derive equality (\ref{three}).  Substitute $x=e_1$ and $y=e_2$ in (\ref{two}) and take the inner product of the resulting vector equation with $Je_1$ to derive equality (\ref{four}).  
\end{proof}

\begin{lemma}\label{eigenspace}
If $\tau \neq 0$, then the eigenspaces of $B$ are $J$-invariant.
\end{lemma}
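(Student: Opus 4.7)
The plan is to observe that, under the hypothesis $\tau \neq 0$, Lemma \ref{commute} gives the commutation $AJ = JA$, and this forces $B$ and $J$ to commute as endomorphisms of $V$. Commuting linear maps always preserve one another's eigenspaces, so the $J$-invariance of the eigenspaces of $B$ follows immediately.

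More precisely, the key computation is
\[
BJ \;=\; (AJ)J \;=\; A J^2 \;=\; -A, \qquad JB \;=\; J(JA) \;=\; J^2 A \;=\; -A,
\]
where the first equality in each line uses the definition $B = AJ$ together with the identity $AJ = JA$ from Lemma \ref{commute}, and the second equality uses $J^2 = -\Id_V$. Thus $BJ = JB$. If $v \in V$ satisfies $Bv = \mu v$, then
\[
B(Jv) \;=\; J(Bv) \;=\; \mu\, Jv,
\]
so $Jv$ lies in the same $\mu$-eigenspace as $v$. Hence every eigenspace of $B$ is $J$-invariant.

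There is no real obstacle here: once Lemma \ref{commute} is in hand the argument is essentially a single line, and the only thing to check is the bookkeeping with $J^2 = -\Id_V$. This lemma is being isolated because its conclusion will be used repeatedly in the sequel to decompose $V$ into $B$-eigenspaces that are simultaneously holomorphic planes, which is precisely the structure needed to produce the orthogonal holomorphic planes $W_1, W_2$ appearing in Theorem \ref{kahleralgebraic}(2) and the holomorphic plane $W$ in Theorem \ref{kahleralgebraic}(4).
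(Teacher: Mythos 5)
Your proof is correct and follows essentially the same route as the paper: both invoke Lemma \ref{commute} to get $AJ = JA$, use $J^2 = -\Id_V$, and conclude by pushing $J$ past $B$ on an eigenvector. The only cosmetic difference is that you isolate the intermediate commutation $BJ = JB$ as a separate step, whereas the paper runs the computation $BJv = AJ^2v = -Av = J^2Av = JBv = \mu Jv$ in a single chain.
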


\begin{proof}
By Lemma \ref{commute}, $B=AJ=JA$.  If $v \in V$ and $\mu \in \mathbf{R}$ satisfy $Bv=\mu v$, then $BJv=AJ^2v=-Av=J^2Av=JBv=\mu Jv.$ 
\end{proof}

\begin{lemma}\label{product}
If $\tau\cdot \kappa \neq 0$ and if $\{e_1,e_2\}$ are an orthonormal pair of eigenvectors of $B$ that do not span a holomorphic plane, then the corresponding eigenvalues $\mu_1$ and $\mu_2$ satisfy $\mu_1 \cdot \mu_2=\kappa/\tau$.
\end{lemma}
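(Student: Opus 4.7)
The plan is to derive the conclusion directly from the two identities in Lemma \ref{relations}, by first arguing that the non-holomorphic hypothesis makes a certain quantity nonzero, and then extracting the product $\mu_1\mu_2$ by case analysis.

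First I would observe that the span $\spn(e_1,e_2)$ fails to be holomorphic if and only if $1-\langle e_1,Je_2\rangle^2>0$. Indeed, since $J$ is an orthogonal almost complex structure, $Je_2$ is a unit vector, so by Cauchy--Schwarz the inner product $\langle e_1,Je_2\rangle$ has absolute value one precisely when $Je_2=\pm e_1$; and since $\langle Je_1,e_1\rangle=0$, this is equivalent to $\spn(e_1,e_2)$ being $J$-invariant. Under our hypothesis we may therefore divide equation (\ref{four}) of Lemma \ref{relations} by the nonzero factor $1-\langle e_1,Je_2\rangle^2$ to obtain
\begin{equation*}
\mu_2(\kappa-\tau\mu_1\mu_2)=0.
\end{equation*}

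Next I would observe that the roles of $e_1$ and $e_2$ are symmetric in the hypotheses and in the formation of $B$-eigenvectors, and that $\langle e_2,Je_1\rangle^2=\langle e_1,Je_2\rangle^2$ since $J$ is skew-adjoint. Applying (\ref{four}) with the indices interchanged, the same reasoning yields
\begin{equation*}
\mu_1(\kappa-\tau\mu_1\mu_2)=0.
\end{equation*}

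From these two equations the conclusion follows by a short case split. If $\mu_1\mu_2=\kappa/\tau$, we are done immediately. Otherwise both $\mu_1=0$ and $\mu_2=0$; substituting this into equation (\ref{three}) of Lemma \ref{relations} gives $\kappa(1-\langle e_1,Je_2\rangle^2)=0$, which contradicts the standing assumptions $\kappa\neq 0$ and $1-\langle e_1,Je_2\rangle^2>0$. Since everything reduces to linear algebra in two identities already in hand, there is no real obstacle here; the only subtle point is the symmetry step, which depends on recognizing that (\ref{four}) is not manifestly symmetric in its indices and must be reapplied after swapping $e_1$ and $e_2$.
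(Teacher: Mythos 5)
Your proof is correct, but the paper gets to the conclusion more economically by using the two identities in the opposite order. The paper's argument first inspects equation~(\ref{three}): since $\kappa\neq 0$ and $1-\langle e_1,Je_2\rangle^2\neq 0$, the left side of~(\ref{three}) is nonzero; if $\mu_2=0$ the right side would vanish, so $\mu_2\neq 0$. With $\mu_2\neq 0$ in hand, one simply cancels the nonzero factor $\mu_2(1-\langle e_1,Je_2\rangle^2)$ from~(\ref{four}) to read off $\kappa=\tau\mu_1\mu_2$. You instead start from~(\ref{four}), reapply it with $e_1$ and $e_2$ swapped to get a second relation $\mu_1(\kappa-\tau\mu_1\mu_2)=0$, and then invoke~(\ref{three}) only to rule out the degenerate case $\mu_1=\mu_2=0$. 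This works and the symmetry you invoke is legitimate (the hypotheses of Lemma~\ref{relations} are symmetric in $e_1,e_2$, and $\langle e_2,Je_1\rangle^2=\langle e_1,Je_2\rangle^2$), but it costs you an extra rederivation and a case split. The paper's route spares both by noticing that~(\ref{three}) already forces $\mu_2\neq 0$ without any swapping.
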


\begin{proof}
The hypothesis implies that $\langle e_1, Je_2 \rangle^2 \neq 1$.  By (\ref{three}), $\mu_2 \neq 0$.  By (\ref{four}), the desired equality holds.



\end{proof}

\begin{lemma}\label{special}
If $\tau \cdot \kappa \neq 0$ and if $\dim(V)=4$, then there exists $\mu_1,\mu_2 \in \mathbf{R}$ and orthogonal holomorphic planes $W_1$ and $W_2$ such that $\mu_1\cdot \mu_2=\kappa/\tau$ and $$A=J \circ(\mu_1 \pi^{W_1}+\mu_2 \pi^{W_2}).$$
\end{lemma}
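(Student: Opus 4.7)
My plan is to read off the desired decomposition of $A$ from the spectral decomposition of $B := AJ$. By Lemma \ref{commute}, $B$ is a symmetric endomorphism of the real inner product space $V$ that commutes with $J$, and by Lemma \ref{eigenspace} its eigenspaces are $J$-invariant. Since $J$ has no real eigenvalues, every $J$-invariant subspace has even real dimension, so with $\dim V = 4$ the only possibilities are that $B$ has a single eigenvalue $\mu$ (so $B = \mu\Id$) or two distinct real eigenvalues whose eigenspaces $W_1, W_2$ are orthogonal, each of real dimension two, and both $J$-invariant---that is, orthogonal holomorphic planes.

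In the two-eigenvalue case I would take unit vectors $e_i \in W_i$; because $W_2$ is $J$-invariant and orthogonal to $W_1$, the vector $Je_2$ lies in $W_2$ and is orthogonal to $e_1$, so $\spn(e_1, e_2)$ is a non-holomorphic plane and Lemma \ref{product} yields $\mu_1 \mu_2 = \kappa/\tau$. In the degenerate single-eigenvalue case, any orthonormal pair $\{e_1, e_2\}$ with $e_2 \in \spn(e_1, Je_1)^{\perp}$ (which exists because $\dim V = 4$) consists of eigenvectors spanning a non-holomorphic plane, and Lemma \ref{product} gives $\mu^2 = \kappa/\tau$; I would then choose $W_1, W_2$ to be any pair of orthogonal holomorphic planes with $V = W_1 \oplus W_2$ and set $\mu_1 = \mu_2 = \mu$, in agreement with Remark \ref{theoremnote}.

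To recover the formula $A = J\circ(\mu_1\pi^{W_1} + \mu_2\pi^{W_2})$, I would use the spectral decomposition $B = \mu_1\pi^{W_1} + \mu_2\pi^{W_2}$ together with the relation $AJ = B$. Because each $W_i$ is $J$-invariant, $J$ commutes with each $\pi^{W_i}$, so solving $AJ = B$ via right multiplication by $-J$ (and $J^2 = -\Id$) gives $A = -J\circ(\mu_1\pi^{W_1} + \mu_2\pi^{W_2})$; replacing each $\mu_i$ by $-\mu_i$, which preserves the product $\mu_1\mu_2 = \kappa/\tau$, yields the stated form. The main obstacle is bookkeeping rather than a hard step: verifying that $B$ has no zero eigenvalue (ruled out because the target equality $\mu_1\mu_2 = \kappa/\tau \neq 0$ would otherwise fail, using that $\tau\kappa \neq 0$), and checking that in the single-eigenvalue case a non-holomorphic orthonormal pair of eigenvectors is genuinely available so that Lemma \ref{product} applies.
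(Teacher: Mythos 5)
Your proposal is correct and follows essentially the same route as the paper: read off the decomposition from the spectral data of the symmetric operator $B=AJ=JA$, use Lemma \ref{eigenspace} to get $J$-invariant eigenspaces, apply Lemma \ref{product} to produce $\mu_1\mu_2=\kappa/\tau$, and invert $AJ=B$ to recover the formula for $A$. The paper streamlines the argument slightly by defining $W_1=\spn(e_1,Je_1)$ and $W_2=W_1^\perp$ directly (with the eigenvalue labeled $-\mu_1$ from the outset), which handles the equal-eigenvalue case without an explicit case split, but the substance is identical.
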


\begin{proof}
Let $e_1$ be a unit eigenvector of $B$ with eigenvalue $-\mu_1$ and set $W_1=\spn(e_1,Je_1)$ and $W_2=W_1^{\perp}$.  Then $W_1$ and $W_2$ are orthogonal holomorphic planes.  They consist of eigenvectors of $B$ by Lemma \ref{eigenspace}.  Let $-\mu_2$ be the eigenvalue corresponding to the subspace $W_2$.  By Lemma \ref{product},  $\mu_1\cdot \mu_2=\kappa/\tau$.  If $v \in W_i$, then $$Av=-J^2Av=-JBv=\mu_iJv,$$ from which the conclusion follows.
\end{proof}

\begin{lemma}\label{Equal}
If $\tau\cdot \kappa \neq 0$, and if $\dim(V) \geq 6$, then $B$ is a nonzero multiple of the identity.
\end{lemma}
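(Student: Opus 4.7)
The plan is to leverage the structure theory established so far: by Lemma \ref{commute} $B=AJ=JA$ is symmetric, by Lemma \ref{eigenspace} its eigenspaces are $J$-invariant (hence even-dimensional), and by Lemma \ref{product} eigenvalues of $B$ on orthonormal eigenvectors that do not span a holomorphic plane always multiply to $\kappa/\tau$. Since $\kappa/\tau\neq 0$, every eigenvalue of $B$ is nonzero. The goal is to deduce that $B$ has a single eigenvalue.

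First, suppose for contradiction that $B$ has at least two distinct eigenvalues and write $V=V_1\oplus\cdots\oplus V_k$ as an orthogonal sum of eigenspaces, with corresponding distinct eigenvalues $\mu_1,\dots,\mu_k$. For $i\neq j$, pick unit vectors $e_i\in V_i$, $e_j\in V_j$. Since $V_j$ is $J$-invariant, $Je_j\in V_j$, which is orthogonal to $V_i$; therefore $\langle e_i,Je_j\rangle=0$ and $\{e_i,e_j\}$ does not span a holomorphic plane. Lemma \ref{product} then yields $\mu_i\mu_j=\kappa/\tau$ for every such pair. If $k\geq 3$, the equalities $\mu_1\mu_2=\mu_1\mu_3=\kappa/\tau$ and $\mu_1\neq 0$ force $\mu_2=\mu_3$, a contradiction; hence $k=2$.

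Next, I would exploit the hypothesis $\dim(V)\geq 6$. Since $V_1$ and $V_2$ are both even-dimensional and sum to at least six, one of them, say $V_1$, has dimension at least four. Choose a unit vector $e_1\in V_1$. The subspace $\spn(e_1,Je_1)$ lies in $V_1$ and has dimension two, so its orthogonal complement inside $V_1$ contains a unit vector $e_1'$. Then $\{e_1,e_1'\}$ is an orthonormal pair of eigenvectors of $B$, both with eigenvalue $\mu_1$, and $\langle e_1,Je_1'\rangle=-\langle Je_1,e_1'\rangle=0$, so they do not span a holomorphic plane. Applying Lemma \ref{product} to this pair gives $\mu_1^2=\kappa/\tau$. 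Combined with $\mu_1\mu_2=\kappa/\tau$ and $\mu_1\neq 0$, this yields $\mu_1=\mu_2$, contradicting the assumption that the two eigenvalues are distinct.

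Hence $B$ has a single eigenvalue $\mu$ and $B=\mu\,\Id$. To see that $\mu\neq 0$, note that $\tau\neq 0$ forces $A\neq 0$ by Convention \ref{iso}, so $B=AJ\neq 0$. There is no real obstacle here beyond the two applications of Lemma \ref{product}: one across distinct eigenspaces (which pins down the product), and one inside a single eigenspace of dimension $\geq 4$ (which is precisely the feature unavailable in the four-dimensional setting treated by Lemma \ref{special}). The dimension hypothesis $\dim(V)\geq 6$ is used exactly to guarantee room for the second application.
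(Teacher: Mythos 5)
Your proof is correct and follows essentially the same route as the paper's: use Lemma \ref{eigenspace} and Lemma \ref{product} to bound the number of distinct eigenvalues of $B$ by two, then use $\dim(V)\geq 6$ and $J$-invariance to guarantee an eigenspace of dimension at least four, apply Lemma \ref{product} inside that eigenspace to get $\mu_1^2=\kappa/\tau$, and combine with the cross-eigenspace relation $\mu_1\mu_2=\kappa/\tau$ to force $\mu_1=\mu_2$. The only difference is that you spell out the ``at most two distinct eigenvalues'' step and the nonvanishing of the eigenvalues, which the paper leaves implicit.
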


\begin{proof}
Lemmas \ref{eigenspace} and \ref{product} imply that $B$ has at most two distinct eigenvalues. Seeking a contradiction, suppose that $B$ has two distinct eigenvalues $\mu_1$ and $\mu_2$ and let $E_1$ and $E_2$ denote the corresponding eigenspaces.  Without loss of generality, $\dim(E_1) \geq 4$.  Applying Lemma \ref{product} to an orthonormal pair in $E_1$ that do not span a holomorphic plane implies that $\mu_1^2=\kappa /\tau$.  On the other hand, by Lemma \ref{product}, $\mu_1 \cdot \mu_2=\kappa / \tau$, a contradiction.

\end{proof}

\begin{lemma}\label{setup}
If $\tau \neq 0$ and $\kappa=0$, then there is a nonzero constant $\mu$ and a holomorphic plane $W$ such that $$A= J \circ \mu\pi^W.$$
\end{lemma}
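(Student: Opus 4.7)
The plan is to exploit the structure of $B = AJ = JA$ established in Lemma \ref{commute} together with the $J$-invariance of its eigenspaces (Lemma \ref{eigenspace}), and to read off strong constraints from Lemma \ref{relations} in the special case $\kappa = 0$. Setting $\kappa = 0$ in equation (\ref{four}) yields that for any orthonormal pair $\{e_1,e_2\}$ of eigenvectors of $B$ with eigenvalues $\mu_1,\mu_2$,
\begin{equation*}
\tau\,\mu_1 \mu_2^{2}\,(1-\langle e_1,Je_2\rangle^{2}) = 0.
\end{equation*}
Since $\tau \neq 0$, this says that whenever $\{e_1,e_2\}$ is an orthonormal pair of eigenvectors that does \emph{not} span a holomorphic plane, at least one of the two eigenvalues must vanish.

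Next I would show that $B$ has exactly one nonzero eigenvalue $\mu$, whose eigenspace $W$ is a holomorphic plane. First, suppose $\mu_1 \neq \mu_2$ are two distinct eigenvalues of the symmetric operator $B$ with $J$-invariant eigenspaces $E_{\mu_1} \perp E_{\mu_2}$. For unit $e_1 \in E_{\mu_1}$ and $e_2 \in E_{\mu_2}$, the inclusion $Je_2 \in E_{\mu_2}$ forces $\langle e_1, Je_2\rangle = 0$, so the pair spans no holomorphic plane; the displayed identity then forces $\mu_1 \mu_2 = 0$. Hence $B$ can have at most one nonzero eigenvalue $\mu$, and since $A\neq 0$ implies $B \neq 0$, such a $\mu$ exists. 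Second, if $W := E_{\mu}$ had real dimension $\geq 4$, then $J$-invariance would allow us to pick an orthonormal pair inside $W$ not spanning a holomorphic plane (take $e_1 \in W$ and $e_2 \in W \cap \spn(e_1,Je_1)^\perp$), contradicting $\mu \neq 0$ via the identity again. Thus $W$ is $2$-dimensional and $J$-invariant, i.e., a holomorphic plane, and $V = W \oplus W^\perp = W \oplus \ker B$.

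Finally, $B$ acts as $\mu\,\Id$ on $W$ and as $0$ on $W^{\perp}$, so $B = \mu\,\pi^{W}$. Since $W$ is $J$-invariant, $\pi^{W}$ commutes with $J$, and from $AJ = B$ we obtain $A = -BJ = -\mu\,\pi^{W} J = J\circ(-\mu\,\pi^{W})$. Absorbing the sign into $\mu$ gives $A = J\circ \mu\,\pi^{W}$ with $\mu \neq 0$, as desired.

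I expect the main conceptual step to be the second paragraph: everything hinges on being able to produce orthonormal eigenvector pairs that do \emph{not} span holomorphic planes, and for this the $J$-invariance of eigenspaces of $B$ (combined with the orthogonality of eigenspaces for distinct eigenvalues of a symmetric operator) is essential. The rest is bookkeeping once the eigenstructure of $B$ is pinned down.
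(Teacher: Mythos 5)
Your proposal is correct and follows essentially the same route as the paper: both rely on $B=AJ=JA$ being symmetric with $J$-invariant eigenspaces (Lemmas \ref{commute}, \ref{eigenspace}) and then use identity (\ref{four}) with $\kappa=0$ to force every eigenvector orthogonal to a fixed holomorphic plane $W\subset E_\mu$ to have eigenvalue zero. You split the eigenstructure analysis into two explicit sub-claims (at most one nonzero eigenvalue, and its eigenspace is exactly $2$-dimensional), whereas the paper compresses both into the single observation that any eigenvector of $B$ in $W^{\perp}$ must have eigenvalue zero; these are the same calculation organized differently.
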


\begin{proof}
As $\tau \neq 0$, $A \neq 0$ and $B=AJ=JA\neq 0$.  Let $-\mu$ be a nonzero eigenvalue of $B$ and $v\in V$ an eigenvector of eigenvalue $\mu$.  Set $W=\spn(v,Jv)$.  The subspace $W$ consists of eigenvectors with eigenvalue $-\mu$ by Lemma \ref{eigenspace}.  Moreover, if $w \in W$, then $Aw=-J^2Aw=-JBw=\mu Jw$.  If $v \in W^{\perp}$ is an eigenvector of $B$, then its eigenvalue is zero by (\ref{four}).  Therefore $W^{\perp}=\ker(B)=\ker(A),$ concluding the proof.


\end{proof}

\noindent \textit{Proof of Theorem \ref{kahleralgebraic}.\\}  
The curvature tensors appearing in the statement of the Theorem are K\"ahler and almost isotropic with constant $\kappa$ as can be verified directly from their formulae.  These are the only such algebraic curvature tensors as we now argue in each case:\\

\noindent (1):  Obvious.\\

\noindent (2): Immediate from Lemma \ref{special} and the expression for $R$ from Theorem \ref{algebraic}.\\

\noindent (3): By Lemma \ref{zero}, $\kappa \cdot \tau\neq 0$.  By Lemma \ref{Equal} there is a constant $\mu$ such that $B=AJ=JA=\mu \Id$.  Multiplying through by $J$, $A=-\mu J$.  The desired formula is derived by substituting this expression for $A$ into the expression for $R$ from Theorem \ref{algebraic} and then simplifying using the equality $\mu^2=\kappa/\tau$ from Lemma \ref{product}.\\

\noindent (4): If $R$ is isotropic, then the Lemma holds with $c=0$.  If $R$ is not isotropic, then by Lemma \ref{setup}, there exists a holomorphic plane $W$ and a nonzero constant $\mu$ such that $A=\mu(J \circ \pi_W)$.  The conclusion holds with $c=\tau \mu^2$ by (\ref{formula}).

\qed

\begin{definition}
The \textit{nullity space} of an algebraic curvature tensor $R$ on an inner product space $(V\langle \cdot, \cdot \rangle)$ is the subspace $$N=\{v \in V\, \vert\, R(w,v)=0\,\,\text{for every}\,\, w\in V\}.$$  Its dimension is the \textit{index of nullity}.
\end{definition}

\begin{lemma}\label{nullity}
Let $R$ be a Kahler almost isotropic algebraic curvature tensor as in Theorem \ref{kahleralgebraic}-(4) with $c\neq0$.  Then $W^{\perp}=\ker(A)$ is the nullity space of $R$.  In particular, the index of nullity is $\dim(V)-2$.
\end{lemma}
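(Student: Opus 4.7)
The plan is to establish the two inclusions $W^{\perp}\subseteq N$ and $N\subseteq W^{\perp}$ separately, working directly from the defining formula
\[
R_A(x,y)z = 2\langle x, Ay\rangle Az + \langle x, Az\rangle Ay - \langle y, Az\rangle Ax
\]
with $A = J\circ \pi^{W}$ and $R = cR_A$, $c\neq 0$. The crucial structural observation is that, since $W$ is a holomorphic plane, it is $J$-invariant, so $\image(A) = J(W) = W$ while $\ker(A) = W^{\perp}$.

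For the inclusion $W^{\perp}\subseteq N$, I will fix $v \in W^{\perp}$ and substitute $y = v$ in the formula above. The first two terms vanish immediately because $Av = 0$, leaving a single term proportional to $\langle v, Az\rangle Ax$. Since $Az \in \image(A)=W$ and $v \in W^{\perp}$, this inner product is zero; hence $R(x,v)z = 0$ for all $x,z \in V$, which puts $v$ in $N$.

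For the reverse inclusion I will argue contrapositively: if $v \notin W^{\perp}$ then $Av\neq 0$, and I will exhibit arguments on which $R(\cdot,v)\cdot$ is nonzero. Evaluating $R_A(Av,v)v$ and using the skew-symmetry identity $\langle v,Av\rangle = 0$ to kill the third term, the expression collapses to $3\langle Av,Av\rangle\, Av$, which is nonzero. Hence $v\notin N$. The dimension count $\dim N = \dim V - 2$ follows at once from $\dim W = 2$.

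No step here presents a genuine obstacle. The only point requiring care is the observation $\image(A)\subseteq W$, which is exactly where the hypothesis that $W$ is a \emph{holomorphic} plane (rather than merely a $2$-plane) enters; without it, the third term in $R_A(x,v)z$ would not annihilate $W^{\perp}$.
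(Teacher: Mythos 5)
Your proof is correct and follows essentially the same route as the paper's. The paper likewise observes $W^\perp=\ker(A)\subseteq N$ from the formula, then computes $R(Jw,w)w=3c\langle w,w\rangle Jw\neq 0$ for $0\neq w\in W$ to obtain $N\cap W=\{0\}$ and hence $N=W^\perp$; your computation $R(Av,v)v=3c\langle Av,Av\rangle Av$ for $v\notin W^\perp$ is the same key observation (note $Av=Jw$ when $v=w\in W$), merely recast to yield $N\subseteq W^\perp$ directly rather than via the complementary-subspace argument.
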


\begin{proof}
The curvature tensor is given by $$R(x,y)z=c[2\langle x, Ay \rangle Az + \langle x, Az\rangle Ay +\langle Ay,z\rangle Ax]$$ where $A=J \circ \pi^W$. The above formula implies $W^{\perp}=\ker(A) \subset N$.  If $w \in W$, then $R(Jw,w)w=3c\langle w, w \rangle Jw$.  Therefore $N \cap W=\{0\}$ from which the Lemma follows.
\end{proof}






\section{Proof of Theorem \ref{main}.} \label{proof examples}
Let $(M,\g)$ denote a complete, connected and simply connected K\"ahler almost isotropic manifold of dimension at least four.  There is a function $\kappa:M \rightarrow \mathbf{R}$ such that each point $p\in M$ is an almost isotropic point with associated constant $\kappa(p)$.   

The proof of Theorem \ref{main} is presented at the end of this section after a few preliminary results.  The next Lemma is most likely well known; we present a proof for the convenience of the reader.

\begin{lemma}\label{split}
Let $B$ be an open connected subset of a Riemannian manifold $(M,\g)$ admitting a pair of transverse, orthogonal, and totally geodesic foliations $\mathcal{F}_1$ and $\mathcal{F}_2$.  Then $B$ is locally isometric to the product $\mathcal{F}_1 \times \mathcal{F}_2$.
\end{lemma}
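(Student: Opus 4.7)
My plan is to reduce the lemma to the local de Rham decomposition theorem by showing that the two tangent distributions $D_1 = T\mathcal{F}_1$ and $D_2 = T\mathcal{F}_2$ are each parallel with respect to the Levi-Civita connection $\nabla$. Since $D_1$ and $D_2$ are orthogonal complements of each other at every point of $B$, once both are parallel the local de Rham theorem immediately yields that every point of $B$ has a neighborhood isometric to a product of leaves.

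The heart of the argument is the following computation. Suppose $X, Y$ are local sections of $D_1$. Total geodesy of $\mathcal{F}_1$ means the leaves have vanishing second fundamental form, so $\nabla_X Y$ stays in $D_1$. To show $D_1$ is parallel, I still need $\nabla_Z Y \in D_1$ whenever $Y$ is a section of $D_1$ and $Z$ is a section of $D_2$. For any section $W$ of $D_2$, orthogonality gives $\langle Y, W \rangle = 0$, so
\begin{equation*}
\langle \nabla_Z Y, W \rangle = -\langle Y, \nabla_Z W \rangle.
\end{equation*}
Integrability of $\mathcal{F}_2$ makes $[Z,W]$ a section of $D_2$, and total geodesy of $\mathcal{F}_2$ makes $\nabla_W Z$ a section of $D_2$; hence $\nabla_Z W = [Z,W] + \nabla_W Z$ is a section of $D_2$ and orthogonal to $Y$. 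Thus $\langle \nabla_Z Y, W \rangle = 0$ for every section $W$ of $D_2$, which combined with the previous observation shows $\nabla D_1 \subseteq D_1$. By symmetry $D_2$ is also parallel.

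The second step is to invoke the local de Rham theorem for a pair of complementary parallel distributions: around any point $p \in B$ one may choose a sufficiently small neighborhood $U$ and take the leaves $L_1$ and $L_2$ of $\mathcal{F}_1$ and $\mathcal{F}_2$ through $p$ within $U$. The map $L_1 \times L_2 \to U$ sending $(q_1,q_2)$ to the endpoint obtained by first moving along a leaf of $\mathcal{F}_2$ from $p$ to $q_2$ and then along a leaf of $\mathcal{F}_1$ (or, equivalently, any composition using the commuting local flows associated to the parallel distributions) is a diffeomorphism, and parallelism of both distributions forces the pullback metric to be the Riemannian product of the induced metrics on $L_1$ and $L_2$.

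I expect no substantial obstacle: the only nontrivial content is the short computation above showing $\nabla_Z Y \in D_1$, and everything after that is a standard citation. The one subtlety is making sure that the local product structure obtained from de Rham agrees with the foliation leaves (rather than some unrelated parallel splitting), but this is automatic because the parallel distributions appearing in de Rham's theorem are precisely $D_1$ and $D_2$ themselves, whose integral manifolds are, by hypothesis, the leaves of $\mathcal{F}_1$ and $\mathcal{F}_2$.
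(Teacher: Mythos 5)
Your proof is correct and follows essentially the same route as the paper: both reduce to the local de Rham splitting theorem by showing the tangent distributions are parallel, and both establish parallelism by pairing $\nabla_Z Y$ against a section of the orthogonal distribution and invoking integrability plus total geodesy. The only cosmetic difference is that you spell out the decomposition $\nabla_Z W = [Z,W] + \nabla_W Z$ where the paper simply asserts $\nabla_{Y_2}Z$ is tangent to $V$; the content is identical.
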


\begin{proof}
If $H=T\mathcal{F}_1$ and $V=T\mathcal{F}_2$, then the tangent bundle splits orthogonally $TB=H\oplus V$. By de Rham's splitting theorem, it suffices to prove that the distribution $H$ is parallel on $B$.  Let $X$ be a vector field tangent to $H$.  We must show that if $Y$ is a vector field, then $\nabla_{Y} X$ is also tangent to $H$.  

Decompose $Y=Y_1+Y_2$ with $Y_1$ tangent to $H$ and $Y_2$ tangent to $V$.  Then $\nabla_{Y_1} X$ is tangent to $H$ since $H$ is integrable and totally geodesic.  If $Z$ is a vector field tangent to $V$, then since $\g(X,Z)=0$, $$\g(\nabla_{Y_2} X, Z)=-\g(X, \nabla_{Y_2} Z)=0$$ where the last equality holds since $V$ is integrable and totally geodesic.  Therefore $\nabla_{Y_2} X$ is tangent to $H$, concluding the proof.
\end{proof}

\begin{proposition}\label{k12}
If $\dim(M) =4$ and $\kappa(p) \neq 0$, then the constants $\mu_1(p)$ and $\mu_2(p)$ associated to the curvature tensor $R_p$ by Theorem \ref{kahleralgebraic}-(2) are equal.
\end{proposition}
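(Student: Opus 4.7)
The strategy is proof by contradiction via the second Bianchi identity. Assume $\mu_1(p) \neq \mu_2(p)$. Although $R$ determines $A$ only up to sign, it determines the symmetric operator $B^2=(AJ)^2$ unambiguously, and the eigenvalues of $B^2$ are $\mu_1^2$ and $\mu_2^2$; continuity of $R$ thus provides an open neighborhood $U \ni p$ on which $\kappa \neq 0$, $\mu_1 \neq \mu_2$, and $\tau$ is locally constant. The $J$-invariant (Lemma \ref{eigenspace}) eigenspaces $W_1, W_2$ of $B^2$ form smooth, orthogonal, $2$-dimensional distributions on $U$. After passing to a simply connected subset and fixing a consistent sign of $A$, I will represent the curvature smoothly as $R = \kappa R_1 + \tau R_A$ with $A = J \circ (\mu_1 \pi^{W_1}+\mu_2\pi^{W_2})$ and the pointwise relation $\mu_1\mu_2 = \kappa/\tau$.

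The key step is to apply the second Bianchi identity $\sum_{\mathrm{cyc}(X,Y,Z)}(\nabla_X R)(Y,Z)W = 0$ to this representation. Covariant differentiation yields $\nabla_X R = X(\kappa)\,R_1 + \tau\,(\nabla_X R_A)$, with $\nabla_X R_A$ obtained by the product rule applied to the formula for $R_A$ (replacing $A$ by $\nabla_X A$ in each factor). I plan to evaluate the resulting identity at $p$ in a $J$-adapted orthonormal frame $\{e_1, Je_1, e_3, Je_3\}$ with $e_1 \in W_1(p)$ and $e_3 \in W_2(p)$, and to pair it with carefully chosen basis vectors to extract a system of polynomial relations among $\kappa(p), \mu_1(p), \mu_2(p), \tau$, and the components of $\nabla\kappa$ and $\nabla A$ at $p$. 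The K\"ahler identity $\nabla J = 0$ and the commutation $AJ = JA$ from Lemma \ref{commute} force $\nabla_X A$ to commute with $J$, further restricting the allowed components.

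Differentiating the pointwise constraint $\mu_1\mu_2 = \kappa/\tau$ gives $\mu_1\,d\mu_2 + \mu_2\,d\mu_1 = \tau^{-1}d\kappa$, which I will use to eliminate $\nabla\kappa$ in favor of the $d\mu_i$ in the Bianchi-derived relations. The off-diagonal coupling component $R(e_1, Je_1, e_3, Je_3) = -2\kappa$ (computable directly from $R_A$, and nonzero precisely because $\mu_1\mu_2 = \kappa/\tau \neq 0$) ties $W_1$ and $W_2$ together in the Bianchi system; combined with the other relations, this is expected to yield an algebraic identity of the form $(\mu_1(p) - \mu_2(p))\,C = 0$ where $C$ is a nonzero polynomial in $\kappa(p), \mu_1(p), \mu_2(p), \tau$. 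This will force $\mu_1(p) = \mu_2(p)$, contradicting the assumption.

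The main obstacle will be the bookkeeping in the cyclic Bianchi sum for $R = \kappa R_1 + \tau R_A$, which generates numerous terms via the product rule on $R_A$; isolating a scalar identity that cleanly produces the factor $\mu_1 - \mu_2$ requires a judicious choice of index tuples for the pairing, and the central mechanism is the coupling between the two holomorphic planes encoded in $R(e_1,Je_1,e_3,Je_3) = -2\kappa$. A back-up approach, should the single-point identity prove elusive, is to first analyze the case $\mu_1^2(p) \neq \mu_2^2(p)$ via the closed Ricci form $\rho = \lambda_1 \omega_1 + \lambda_2 \omega_2$ (with $\lambda_i = 3(\kappa + \tau\mu_i^2)$ and $\omega_i$ the restriction of the K\"ahler form to $W_i$), deriving structural constraints from $d\rho = 0$ and $d\omega = 0$, and then handle the residual case $\mu_1 = -\mu_2$ (where $\mathrm{Ric} = 0$) separately using the specific form $A = \mu J(\pi^{W_1}-\pi^{W_2})$.
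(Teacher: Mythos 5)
Your setup (smooth neighborhood, $J$-invariant eigenspace distributions $W_1,W_2$, second Bianchi identity, differentiated constraint $\mu_1\mu_2=\kappa/\tau$) matches the paper's, but the mechanism you expect the Bianchi identity to produce is not the one that actually closes the argument, and I do not believe it can. You are hoping for a pointwise polynomial identity of the form $(\mu_1-\mu_2)\,C=0$ with $C$ a nonzero function of $\kappa,\mu_1,\mu_2,\tau$. However, $\nabla R$ at a single point is \emph{not} determined by $R_p$; once you write $R=\kappa R_1+\tau R_A$ on the neighborhood, the Bianchi sum produces linear relations whose unknowns are the derivatives $d\kappa$, $d\mu_i$ and, crucially, the second fundamental forms of the distributions $W_1,W_2$ (i.e.\ quantities like $\g(\nabla_{v_2}v_2,\bar v_1)$). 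These connection coefficients are free at a point, so one cannot eliminate them to get a purely algebraic relation among the $\mu_i$. What the Bianchi identity actually yields -- this is what the paper extracts in equations (5.4)--(5.10) -- is that, when $\mu_1\neq\mu_2$, those second fundamental form components must \emph{vanish}, i.e.\ each $W_i$ is integrable and totally geodesic. The contradiction is then obtained by a different route: by Lemma~\ref{split} (de~Rham), the neighborhood is locally a Riemannian product of two surfaces, and in a product $W_2\subset\ker\mathcal J_{v_1}$, so $\dim\ker\mathcal J_{v_1}\geq 2$; but the explicit form $\mathcal J_v(w)=\kappa w+3\tau\langle w,Av\rangle Av$ with $\kappa\neq 0$ forces $\dim\ker\mathcal J_v\leq 1$. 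That geometric step (totally geodesic $\Rightarrow$ local splitting $\Rightarrow$ Jacobi kernel too large) is the missing idea in your plan; without it the Bianchi bookkeeping will not terminate in a contradiction. Your back-up via the closed Ricci form $\rho$ is not a genuine alternative -- $d\rho=0$ is again a consequence of the second Bianchi identity and yields the same kind of first-order constraint on the connection, not an algebraic constraint on $\mu_1,\mu_2$.
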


\begin{proof}
If not, then by smoothness of the curvature tensor on $M$, there is a ball $B$ in $M$ about $p$ such that $\kappa(b) \neq 0$ and $\mu_1(b)\neq \mu_2(b)$ for each $b \in B$.  Let $W_1$ and $W_2$ denote the corresponding smooth tangent plane distributions on $B$.  There exist unit vector fields $v_1$ and $v_2$ on $B$ tangent to $W_1$ and $W_2$ respectively.  Letting $\bar{v}_i=J v_i$, we obtain an orthonormal framing $\{v_1,\bar{v}_1,v_2,\bar{v}_2\}$ of $TB$ with $W_i=\spn\{v_i,\bar{v}_i\}$ for $i=1,2$.  Note that $$\sec(W_1)-\kappa=3\tau\mu_1^2$$ $$\sec(W_2)-\kappa=3\tau\mu_2^2.$$ 

The goal of the following calculations is to show that the orthogonal distributions $W_1$ and $W_2$ are integrable and totally geodesic.  As $J$ is parallel, 
\begin{equation}\label{parallel}
\g(\nabla_X JY, Z)=\g(J\nabla_X Y,Z)=-\g(\nabla_X Y, JZ)
\end{equation} for all smooth vector fields $X,Y,Z$.  Use (\ref{parallel}) to conclude \begin{equation}\label{alpha}
\g(\nabla_{v_2} v_2, \bar{v}_1)=-\g(\nabla_{v_2} \bar{v}_2, v_1).\end{equation}

Use the differential Bianchi identity, $$0=(\nabla_{v_2}R)(v_1,\bar{v}_1,v_1,v_2)+(\nabla_{v_1}R)(\bar{v}_1,v_2,v_1,v_2)+(\nabla_{\bar{v}_1}R)(v_2,v_1,v_1,v_2)$$ to derive \begin{equation}\label{beta}
3\tau \mu_1^2\g(\nabla_{v_2} v_2,\bar{v}_1) - 3\kappa\g(\nabla_{v_2} \bar{v}_2,v_1)=0.
\end{equation}  Use (\ref{alpha}), (\ref{beta}), and $\kappa\neq \tau \mu_1^2$ (since $\mu_1 \neq \mu_2$) to conclude \begin{equation}\label{gamma}
\g(\nabla_{v_2} v_2,\bar{v}_1)=\g(\nabla_{v_2} \bar{v}_2,v_1)=0.
\end{equation} Set $w_1:=\bar{v}_1$ and $\bar{w}_1:=Jw_1=-v_1$.  Repeating the above calculations with $w_1$ and $\bar{w}_1$ in place of $v_1$ and $\bar{v}_1$, respectively, yields the following analogue of (\ref{gamma}), \begin{equation}\label{delta}
\g(\nabla_{v_2} v_2,\bar{w}_1)=\g(\nabla_{v_2} \bar{v}_2,w_1)=0,
\end{equation} or equivalently, \begin{equation}\label{epsilon}
\g(\nabla_{v_2} v_2,v_1)=\g(\nabla_{v_2} \bar{v}_2,\bar{v}_1)=0.
\end{equation} Set $w_2:=\bar{v}_2$ and $\bar{w}_2:=Jw_2=-v_2$.  Repeating the above calculations with $w_2$ and $\bar{w}_2$ in place of $v_2$ and $\bar{v}_2$, respectively, yields the following analogues of (\ref{gamma}), (\ref{epsilon}) \begin{equation}
\g(\nabla_{w_2} w_2,\bar{v}_1)=\g(\nabla_{w_2} \bar{w}_2,v_1)=0,
\end{equation} and \begin{equation}
\g(\nabla_{w_2} w_2,v_1)=\g(\nabla_{w_2} \bar{w}_2,\bar{v}_1)=0,
\end{equation} or equivalently, \begin{equation}\label{f}
\g(\nabla_{\bar{v}_2} \bar{v}_2,\bar{v}_1)=\g(\nabla_{\bar{v}_2} v_2,v_1)=0,
\end{equation} and \begin{equation}\label{g}
\g(\nabla_{\bar{v}_2} \bar{v}_2,v_1)=\g(\nabla_{\bar{v}_2} v_2,\bar{v}_1)=0.
\end{equation}

The two dimensional distribution $W_2$ is integrable and totally geodesic by (\ref{gamma}), (\ref{epsilon}), (\ref{f}), and (\ref{g}).  Switching the roles of the indices $1$ and $2$ in the differential Bianchi calculation above, yields the following analogue of (\ref{beta}) \begin{equation}
3\tau \mu_2^2\g(\nabla_{v_1} v_1,\bar{v}_2) - 3\kappa\g(\nabla_{v_1} \bar{v}_1,v_2)=0.
\end{equation}  Now, arguing as in the case of the two dimensional distribution $W_2$, the two dimensional distribution $W_1$ is also integrable and totally geodesic.

 As the tangent plane fields $W_1$ and $W_2$ are orthogonal, integrable, and totally geodesic, $B$ is locally isometric to a Riemannian product by Lemma \ref{split}.   Consequently, $W_2 \subset \ker(\mathcal{J}_{v_1})$.  This is a contradiction since $\kappa \neq 0$ implies that $\dim(\ker{J}_{v_1})\leq 1$.
\end{proof}

\begin{lemma}\label{Einstein}
If $p \in M$ satisfies $\kappa(p)\neq 0$, then $R_p\cong \kappa(R_1+R_J)$.  In particular, $p$ is an Einstein point with Einstein constant $(d+2)\kappa(p)$ and has constant holomorphic curvatures $4\kappa(p)$.
\end{lemma}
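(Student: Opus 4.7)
The plan is to apply the algebraic classification in Theorem \ref{kahleralgebraic} to the tangent space at $p$ and use the global information Proposition \ref{k12} provides in the borderline low-dimensional case. Since the curvature tensor $R_p$ is K\"ahler and $\kappa(p)$-almost isotropic on the complex model $(T_pM, \g_p, J_p)$, and since $\dim(T_pM) = d\geq 4$, exactly one of cases (2) or (3) of Theorem \ref{kahleralgebraic} applies (case (4) is excluded by $\kappa(p)\neq 0$, and case (1) is excluded by $d\geq 4$).

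First I would dispatch the easy case $d\geq 6$: case (3) of Theorem \ref{kahleralgebraic} delivers $R_p = \kappa(R_1+R_J)$ immediately. For the remaining case $d=4$, case (2) of Theorem \ref{kahleralgebraic} writes $R_p = \kappa R_1 + \tau R_A$ with $A = J\circ(\mu_1\pi^{W_1}+\mu_2\pi^{W_2})$ for certain constants $\mu_1(p),\mu_2(p)$ with $\mu_1\mu_2 = \kappa/\tau$. Here I would invoke Proposition \ref{k12}, which says precisely that $\mu_1(p)=\mu_2(p)$ whenever $\kappa(p)\neq 0$ in dimension four. Then Remark \ref{theoremnote} records the identity $R_A = \mu_1^2 R_J = (\kappa/\tau) R_J$, collapsing the expression into $R_p = \kappa(R_1+R_J)$ as desired.

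With $R_p = \kappa(R_1+R_J)$ in hand, the ``in particular'' assertions are short computations from the explicit formulas in the introduction. For the holomorphic sectional curvature, this was in fact already noted in the introduction: on an orthonormal pair $\{v,Jv\}$ one computes $\langle R_1(v,Jv)Jv,v\rangle = 1$ and $\langle R_J(v,Jv)Jv,v\rangle = 3$, so $\sec(v,Jv) = 4\kappa$. For the Einstein property, I would use Lemma \ref{model} applied to the two summands: for unit $s$, the Jacobi operator satisfies $\mathcal{J}^{R_1}_s = \Id_{s^\perp}$ (trace $d-1$), while $\mathcal{J}^{R_J}_s(w) = 3\langle w, Js\rangle Js$ has trace $3$ (as $Js \in s^\perp$). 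Therefore
\[
\Ric_p(s,s) = \tr\mathcal{J}_s = \kappa(d-1) + 3\kappa = (d+2)\kappa,
\]
which gives the Einstein constant $(d+2)\kappa(p)$.

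There is no serious obstacle; the entire argument is a bookkeeping application of previously established results. The only subtlety worth flagging is making sure the dimension dichotomy cleanly separates the direct case $d\geq 6$ from the case $d=4$ that genuinely requires the global input of Proposition \ref{k12} (the purely algebraic statement in Theorem \ref{kahleralgebraic}-(2) allows $\mu_1\neq\mu_2$, so without Proposition \ref{k12} one could not conclude $R_p = \kappa(R_1+R_J)$ pointwise in dimension four).
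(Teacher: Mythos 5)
Your argument is correct and follows essentially the same route as the paper: both combine Theorem~\ref{kahleralgebraic} (cases (2) and (3)), Remark~\ref{theoremnote}, and Proposition~\ref{k12} to conclude $R_p\cong\kappa(R_1+R_J)$. The only cosmetic difference is in the ``in particular'' clause, where the paper cites Lemma~\ref{einstein} for the Einstein constant while you verify it (and the holomorphic curvature $4\kappa$) by a short direct trace computation; both are immediate once the curvature tensor is identified.
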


\begin{proof}
By Theorem \ref{kahleralgebraic}-(2,3), Remark \ref{theoremnote}, and Proposition \ref{k12}, $R_p\cong \kappa(R_1+R_J)$.  Also, by Lemma \ref{einstein}, we obtain that $p$ has Einstein constant $(d+2)\kappa(p)$.
\end{proof}

\noindent \textit{Proof of Theorem \ref{main}.\\} 
By Lemma \ref{Einstein}, each point in the set $\kappa \neq 0$ is an Einstein point.  By Schur's Theorem for Einstein points, $\kappa$ is constant in each connected component of $\kappa \neq 0$.  By continuity, $\kappa$ is constant.  If $\kappa$ is a nonzero constant, then $M$ has constant holomorphic curvatures $4\kappa$ by Lemma \ref{Einstein}, concluding the proof in this case by \cite{haw,igu}.

Now assume that $\kappa$ is identically zero and let $\mathcal{O}$ denote the subset of nonisotropic points in $M$.  At each point $p \in \mathcal{O}$, there is a holomorphic plane $W_p$ in $T_pM$ given by Theorem \ref{kahleralgebraic}-(4).  By Lemma \ref{nullity}, the distribution $p \mapsto W_p^{\perp}$ is the nullity distribution of the curvature tensor and has constant index of nullity $\dim(M)-2$.  The argument given in \cite[Theorem 8]{abe} now applies verbatim to prove (3).
\qed


\end{document}